\definecolor{black}{cmyk}{1.,1.,1.,1.0}
\definecolor{blue}{cmyk}{1.,1.,0.,0.63}
\definecolor{green}{cmyk}{1.,0.,1.,0.63}
\DeclareMathOperator{\dif}{d}
\DeclareMathOperator{\rank}{rank}
\DeclareMathOperator{\supp}{supp}
\DeclareMathOperator{\ord}{ord}
\newcommand{\vf}{\vfill\end{document}}
\newcommand{\explain}[1]{\text{\scriptsize\sf [#1]}}
\theoremstyle{definition}
\newtheorem{defi}{Definition}[section]
\theoremstyle{plain}
\newtheorem{thm}{Theorem}[section]
\newtheorem{lem}{Lemma}[section]
\newtheorem{pro}{Proposition}[section]
\newtheorem{cor}{Corollary}[section]
\newtheorem{rem}{Remark}[section]
\theoremstyle{plain}
\newcommand{\thistheoremname}{}
\newtheorem*{genericthm*}{\thistheoremname}
\newenvironment{namedthm*}[1]{\renewcommand{\thistheoremname}{#1}%
\begin{genericthm*}}
{\end{genericthm*}}
\newtheoremstyle{named}{}{}{\itshape}{}{\bfseries}{.}{.5em}{\thmnote{#3's }#1}
\theoremstyle{named}
\title{\bf On the Weyl-Ahlfors theory of derived curves}
\providecommand{\keywords}[1]{\textbf{\textit{Keywords:}} #1}
\providecommand{\subject}[1]{\textbf{\textit{Mathematics Subject Classification 2010:}} #1}
\author{Dinh Tuan Huynh, Song-Yan Xie\footnote{\ \ partially supported by  NSFC Grant No.~$11688101$}}
\newcommand{\Addresses}{{
		
		\footnotesize
		\noindent
		\textsc{Dinh Tuan Huynh, Hua Loo-Keng center for Mathematical Sciences, Academy of Mathematics and System Science, Chinese Academy of Sciences, Beijing 100190, China \& Department of Mathematics, University of Education, Hue University, 34 Le Loi St., Hue City, Vietnam}
		\par\nopagebreak
		\noindent
		\textit{E-mail address}: \texttt{dinh-tuan.huynh@amss.ac.cn}
		\newline
		
		\noindent
		\textsc{Song-Yan Xie, Academy of Mathematics and System Science \& Hua Loo-Keng Key Laboratory
			of Mathematics, Chinese Academy of Sciences, Beijing 100190, China}
		\noindent
		\par\nopagebreak
		\noindent
		\textit{E-mail address}: \texttt{xiesongyan@amss.ac.cn}
	}}
\begin{document}
	 \date{}
\maketitle
\begin{abstract}
	For derived curves intersecting a family of decomposable hyperplanes in {\sl subgeneral position}, we obtain an analog of the Cartan-Nochka Second Main Theorem, generalizing a classical result of Fujimoto
	about decomposable hyperplanes  in {\sl general position}.
\end{abstract}
\keywords  {Value distribution theory}, {Second Main Theorem}, {entire curves}, {Nochka weights}, {defect relation}, {subgeneral position}, {Wronskians}

\noindent
\subject{32H30}, {32A22}
\section{Introduction}
Value distribution theory was started by Nevanlinna~\cite{Nevanlinna1925} by relating the intersection frequency of a holomorphic map $f:\mathbb{C}\rightarrow\mathbb{P}^1(\mathbb{C})$ with  $q\geq 3$ distinct points in $\mathbb{P}^1(\mathbb{C})$, and the growth rate of $f$. This quantifies the classical little Picard theorem, and also generalizes the fundamental theorem of algebra from polynomials to meromorphic functions.

In higher dimension, Cartan~\cite{Cartan1933} explored Nevanlinna theory in the setting of a linearly nondegenerate entire curve
$f:\mathbb{C}\rightarrow\mathbb{P}^n(\mathbb{C})$
together with a family of $q\geq n+2$ hyperplanes $\{H_i\}_{i\,=\,1,\dots,\, q}$ in {\sl general position}, and he obtained a second main theorem:
\begin{equation}
\label{Cartan SMT}
(q-n-1)
\,
T_f(r)
\,
\leq
\,
\sum_{i=1}^q N_f^{[n]}(r,H_i)+S_f(r),
\end{equation}
 (see section 2 for meanings of these notations) by introducing a Wronskian technique, which is indispensable in the subject~\cite{Ru2009,Ru2018}. For hyperplanes $\{H_i\}_{i\,=\,1,\dots,\, q}$ in $N$-subgeneral position, i.e., there exists some embedding $\mathbb{P}^n(\mathbb{C})\hookrightarrow \mathbb{P}^N(\mathbb{C})$ such that  $\{H_i=H_i'\cap \mathbb{P}^n(\mathbb{C})\}_{i\,=\,1,\dots, \,q}$ are the restrictions of hyperplanes $\{H_i'\subset \mathbb{P}^N(\mathbb{C})\}_{i\,=\,1,\dots,\, q}$ in general position, Cartan anticipated that there shall be
 \begin{equation}
 \label{Nochka SMT}
 (q-2N+n-1)
 \,
 T_f(r)
 \,
 \leq
 \,
 \sum_{i=1}^q N_f^{[n]}(r,H_i)+S_f(r),
 \end{equation}
  and this conjecture was proved by Nochka~\cite{Nochka1983}
   by means of the  so-called Nochka weights.

Meanwhile, independently, Weyl's~\cite{Weyl1938,Weyl1943}  restarted the study of value distribution of entire curves in $\mathbb{P}^N(\mathbb{C})$
with respect to high codimension projective subspaces, by introducing the associated derived curves which assign every point $f(z)$ with the osculating $k^{\text{th}}$-planes passing through that point (see Subsection 2.3).
In the same vein,
Ahlfors~\cite{Ahlfors1941} successfully established a second main theorem type estimate for derived curves, which embraces the inequality~\eqref{Cartan SMT} of Cartan when $k=0$ and the targets are hyperplanes.  The reader is referred to ~\cite{Wu1970, Shabat1985} for  expositions about Weyl--Ahlfors' theory.

Since then Weyl-Ahlfors theory has  much progress. Notably, Stoll~\cite{Stoll53,Stoll54} studied meromorphic maps from parabolic spaces to projective spaces; Cowen--Griffiths \cite{Griffiths-Cowen76} gave a simplified proof of Ahlfors' result using negative curvature; Fujimoto~\cite{Fujimoto1982,Fujimoto1993} established a second main theorem for derived curves of linearly nondegenerate entire curves with optimal truncation level;
Chen~\cite{Chen1990} generalized the Ahlfors' result for degenerated  entire curves.

Inspiring by the works~\cite{Fujimoto1982, Chen1990}, it would be natural to seek a second main theorem for derived curves, having optimal truncation level,  without assuming the nondegeneracy of the  entire curves. Here is our result in this direction, which is a generalization of Cartan-Nochka's Second Main Theorem.

\begin{thm}
	\label{Generalization of Cartan-Nochka's Second Main Theorem}
	Let $f:\mathbb{C}\rightarrow\mathbb{P}^N(\mathbb{C})$ be an entire holomorphic curve, and let $\mathbb{P}^n(\mathbb{C})\subset\mathbb{P}^N(\mathbb{C})$ be the smallest linear projective subspace  containing  $f(\mathbb{C})$. For a fixed integer $k=0,1,\dots, n$, let $A_1,\dots, A_q$ be  $q$ decomposable hyperplanes of $\mathbb{P}
	\big(
	\Lambda^{k+1}(\mathbb{C}^{N+1})
	\big)$ in general position such that none of them contains the induced Pl\"ucker subset  $\mathbb{P}
	\big(
	\Lambda^{k+1}(\mathbb{C}^{n+1})
	\big)\subset
	\mathbb{P}
	\big(
	\Lambda^{k+1}(\mathbb{C}^{N+1})
	\big)$. Then the $k$-th derived curve $F_k$ of $f$ satisfies
	\begin{align*}
	\Bigg(
	q
	-
	2
	\begin{pmatrix}
	N+1\\k+1
	\end{pmatrix}
	+
	\begin{pmatrix}
	n+1\\k+1
	\end{pmatrix}
	\Bigg)
	T_{F_k}(r)\leq\sum_{i=1}^q N_{F_k}^{[(k+1)(n-k)]}(r,A_i)
	+
	S_{F_k}(r).
	\end{align*}
\end{thm}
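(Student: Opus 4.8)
The plan is to deduce the theorem from a Cartan--Nochka type Second Main Theorem for the single map $F_k$, obtained by combining Nochka's weights with the Wronskian method of Fujimoto for decomposable hyperplanes in general position. First I would reduce to a nondegenerate situation: since $\mathbb{P}^n(\mathbb{C})$ is the linear span of $f(\mathbb{C})$, after a linear change of coordinates in $\mathbb{C}^{N+1}$ one may assume $f$ linearly nondegenerate in $\mathbb{P}^n(\mathbb{C})$, and then $F_k$ takes values in the Plücker subset $\mathbb{P}\big(\Lambda^{k+1}(\mathbb{C}^{n+1})\big)$. The hypothesis that no $A_i$ contains $\mathbb{P}\big(\Lambda^{k+1}(\mathbb{C}^{n+1})\big)$ says precisely that the restrictions $\widetilde A_i:=A_i\cap\mathbb{P}\big(\Lambda^{k+1}(\mathbb{C}^{n+1})\big)$ are again \emph{decomposable} hyperplanes of $\mathbb{P}\big(\Lambda^{k+1}(\mathbb{C}^{n+1})\big)$; and since $A_1,\dots,A_q$ are in general position, any $\binom{N+1}{k+1}$ of the $\widetilde A_i$ still have empty intersection, so $\widetilde A_1,\dots,\widetilde A_q$ lie in $\widetilde N$-subgeneral position, $\widetilde N:=\binom{N+1}{k+1}-1$, inside $\mathbb{P}^M(\mathbb{C}):=\mathbb{P}\big(\Lambda^{k+1}(\mathbb{C}^{n+1})\big)$ with $M:=\binom{n+1}{k+1}-1$. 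Since $N_{F_k}^{[p]}(r,A_i)=N_{F_k}^{[p]}(r,\widetilde A_i)$ and $2\widetilde N-M+1=2\binom{N+1}{k+1}-\binom{n+1}{k+1}$, it then suffices to prove
\[
(q-2\widetilde N+M-1)\,T_{F_k}(r)\ \le\ \sum_{i=1}^q N_{F_k}^{[(k+1)(n-k)]}(r,\widetilde A_i)+S_{F_k}(r);
\]
if $q-2\widetilde N+M-1\le0$ this is vacuous, and if $T_{F_k}$ is bounded then $f$ is rational and the bound is a degree count, so one may assume $T_{F_k}(r)\to\infty$.

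Next I would fix a reduced local representation $\widetilde f=(f_0,\dots,f_n)$ of $f$, so that $\widetilde F_k:=\widetilde f\wedge\widetilde f'\wedge\cdots\wedge\widetilde f^{(k)}$ represents $F_k$, and observe that if $\widetilde A_i$ corresponds to the decomposable covector $a_{i,1}\wedge\cdots\wedge a_{i,k+1}$, its contact function is the Wronskian
\[
\Phi_i\ :=\ \big\langle\widetilde F_k,\ a_{i,1}\wedge\cdots\wedge a_{i,k+1}\big\rangle\ =\ W\big(g_{i,1},\dots,g_{i,k+1}\big),\qquad g_{i,j}:=\langle\widetilde f,a_{i,j}\rangle,
\]
well defined up to a nonzero constant, with $N_{F_k}(r,\widetilde A_i)$ counting the zeros of $\Phi_i$ and $m_{F_k}(r,\widetilde A_i)+N_{F_k}(r,\widetilde A_i)=T_{F_k}(r)+O(1)$ by the first main theorem. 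Then, applying Nochka's lemma to $\{\widetilde A_i\}$ viewed as $\widetilde N$-subgeneral hyperplanes of $\mathbb{P}^M(\mathbb{C})$ would give rationals $\omega_1,\dots,\omega_q\in(0,1]$ and a Nochka constant $\widetilde\omega\ge1$ with $\sum_{i\in I}\omega_i\le\rank\{\widetilde A_i:i\in I\}$ whenever $|I|\le\widetilde N+1$, and $\sum_{i=1}^q\omega_i-(M+1)=\widetilde\omega\,(q-2\widetilde N+M-1)$.

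The analytic heart would be a Nochka-weighted form of Fujimoto's estimate: one wants an auxiliary entire function $\mathcal W$, built from the Wronskian $W(f_0,\dots,f_n)$ of $f$ and from sub-Wronskians of its coordinates, such that (i) a pointwise product-to-sum inequality holds which, after integration over circles and the logarithmic derivative lemma, reads
\[
\sum_{i=1}^q\omega_i\,m_{F_k}(r,\widetilde A_i)\ \le\ (M+1)\,T_{F_k}(r)-N_{\mathcal W}(r)+S_{F_k}(r),
\]
and (ii) the Wronskian zero estimate
\[
N_{\mathcal W}(r)\ \ge\ \sum_{i=1}^q\omega_i\Big(N_{F_k}(r,\widetilde A_i)-N_{F_k}^{[(k+1)(n-k)]}(r,\widetilde A_i)\Big)
\]
holds. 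Granting these, one feeds $\sum_i\omega_i m_{F_k}(r,\widetilde A_i)=\sum_i\omega_i T_{F_k}(r)-\sum_i\omega_i N_{F_k}(r,\widetilde A_i)+O(1)$ into (i), cancels $N_{\mathcal W}(r)$ using (ii), drops the weights via $\omega_i\le1$, and gets $\big(\sum_i\omega_i-(M+1)\big)T_{F_k}(r)\le\sum_i N_{F_k}^{[(k+1)(n-k)]}(r,\widetilde A_i)+S_{F_k}(r)$; substituting the Nochka relation and dividing by $\widetilde\omega\ge1$ finishes the reduced statement, hence the theorem.

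The hard part will be establishing (i) and (ii) with the right constants. The coefficient must come out as $M+1=\binom{n+1}{k+1}$, not the smaller $(k+1)(n-k)+1$ that one would obtain by treating $F_k$ as a generic nondegenerate curve in its own linear span, whereas the truncation must stay at $(k+1)(n-k)$, the Weyl--Ahlfors level of the $k$-th derived curve. Achieving both at once forces one to use simultaneously: the decomposability of the $\widetilde A_i$ (so that, pointwise, one may isolate $\binom{n+1}{k+1}$ of their contact functions whose product, through the osculating flag of $f$, obeys a Wronskian identity of the correct degree); the fact that the contact order of a derived curve with a decomposable hyperplane exceeds $(k+1)(n-k)$ only over the zeros of $W(f_0,\dots,f_n)$; and, to carry the subgeneral position, Nochka's rank inequality, which is exactly what bounds how many of the $\widetilde A_i$ can have large contact order with $F_k$ at a given point while keeping $N_{\mathcal W}$ of controlled size.
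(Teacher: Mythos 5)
Your outline follows the paper's route almost exactly: reduce to the linearly nondegenerate case, view the restricted decomposable hyperplanes $\widetilde A_i$ as $(\mathfrak N-1)$-subgeneral hyperplanes in $\mathbb{P}^{\mathfrak n-1}(\mathbb{C})$ with $\mathfrak N=\binom{N+1}{k+1}$ and $\mathfrak n=\binom{n+1}{k+1}$, and run Nochka weights through Cartan's Wronskian machinery together with Fujimoto's vanishing order estimate to obtain the truncation $(k+1)(n-k)$. Your two ingredients (i) and (ii) are, after unwinding Jensen's formula, mathematically equivalent to the paper's a priori estimate plus its Proposition on the zero divisor of $\det\mathcal W$, and your identification of the key tensions (coefficient $\mathfrak n=M+1$ versus truncation $(k+1)(n-k)$; decomposability plus Nochka's rank inequality) is on the mark.

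There is, however, a concrete error in the final arithmetic. You drop the weights via $\omega_i\le1$ and then ``divide by $\widetilde\omega\ge1$''. But Nochka's constant satisfies the \emph{opposite} inequality: in the present setting $\widetilde\omega\le\frac{\mathfrak n-1}{\mathfrak N-1}\le1$, with equality only when $\mathfrak n=\mathfrak N$ (general position). With $\omega_i\le1$ one reaches
\[
\widetilde\omega\,(q-2\mathfrak N+\mathfrak n)\,T_{F_k}(r)\ \le\ \sum_{i=1}^q N_{F_k}^{[(k+1)(n-k)]}(r,\widetilde A_i)+S_{F_k}(r),
\]
and since $\widetilde\omega<1$, dividing by $\widetilde\omega$ only \emph{worsens} the right-hand side to $\frac{1}{\widetilde\omega}\big(\sum_i N_{F_k}^{[(k+1)(n-k)]}(r,\widetilde A_i)+S_{F_k}(r)\big)$, which is not what the theorem claims. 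The standard (and the paper's) fix is to drop the weights via the sharper bound $\omega_i\le\widetilde\omega$ rather than $\omega_i\le1$: then the right-hand side becomes $\widetilde\omega\sum_i N_{F_k}^{[(k+1)(n-k)]}(r,\widetilde A_i)+S_{F_k}(r)$, both sides carry the factor $\widetilde\omega$, and division by the positive constant $\widetilde\omega\ge\frac{\mathfrak n}{2\mathfrak N-\mathfrak n}>0$ yields exactly the stated inequality (the error term $S_{F_k}/\widetilde\omega$ is still of the form $S_{F_k}$). This is not a cosmetic slip: producing $\widetilde\omega$ on both sides so that it cancels is precisely the reason Nochka weights resolve the subgeneral position case, and it is why the paper retains $\omega(i)\le\widetilde\omega$ at the corresponding step.
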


In fact, this result follows directly from the following stronger statement (see Remark~\ref{explain why main theorem implies thm 1}).

\begin{namedthm*}{Main Theorem}
	Let $f:\mathbb{C}\rightarrow\mathbb{P}^n(\mathbb{C})$ be a linearly nondegenerate  entire holomorphic curve. For a fixed integer $k=0,1,\dots, n$, let $A_1,\dots, A_q\subset \mathbb{P}
	\big(
	\Lambda^{k+1}(\mathbb{C}^{n+1})
	\big)$ be  $q$ decomposable hyperplanes
	such that any $\mathfrak{N}$ of them have empty intersection. Then the $k$-th derived curve $F_k$ of $f$ satisfies
	\begin{align}
		\label{smt statement}
	(
		q
		-
		2
		\mathfrak{N}
		+
		\mathfrak{n}
		)\,
		T_{F_k}(r)\leq\sum_{i=1}^q N_{F_k}^{[(k+1)(n-k)]}(r,A_i)
		+
		S_{F_k}(r),
	\end{align}
	where $\mathfrak{n}:=\dim_{\mathbb{C}} 
	\Lambda^{k+1}(\mathbb{C}^{n+1})=\left( \substack{
		n+1\\k+1
	}\right)$.

\end{namedthm*}

Terminologies and notation will be explained in Section~\ref{section: preparation}, while a complete proof will be reached in Section~\ref{proof of the Main Theorem}, which depends on  classical techniques of Cartan's Wronskian~\cite{Cartan1933}, Nochka's weight~\cite{Nochka1983} and Fujimoto's vanishing order estimates~\cite{Fujimoto1982}. 
Whence a defect relation~\eqref{defect relation} can be concluded in Section~\ref{section: applications}. Theorem~\ref{Generalization of Cartan-Nochka's Second Main Theorem} improves a previous result of Chen~\cite{Chen1990} by providing an effective truncation level $(k+1)(n-k)$, which is optimal as shown by an  example of Fujimoto~\cite{Fujimoto1982}.
For $k=0$, we recover the celebrated Cartan-Nochka's Theorem \cite{Nochka1983}. When $\mathfrak{n}=\mathfrak{N}$, our defect relation~\eqref{defect relation} coincides with a result of Fujimoto~\cite{Fujimoto1982} for decomposable hyperplanes in general position. 


\section*{Acknowledgments}
Both authors are grateful to the Academy of Mathematics and System Sciences in Beijing for nice working conditions. Huynh also wants to thank the support from Hue University.

\section{Preliminaries}
\label{section: preparation}

\subsection{Nevanlinna theory}

We denote by $\Delta_r\subset \mathbb{C}$ the disk  of radius $r>0$ centered at the origin.
Fix a truncation level $m\in \mathbb{N}\cup \{\infty\}$, 
for an effective divisor $E=\sum_i\alpha_i\,a_i$ on $\mathbb{C}$ where $\alpha_i\geq 0$, $a_i\in\mathbb{C}$,  the $m$-truncated degree of the divisor $E$ on  the disk $\Delta_r$ is given by
\[
n^{[m]}(r,E)
:=
\sum_{a_i\in\Delta_r}
\min
\,
\{m,\alpha_i\},
\]
the \textsl{truncated counting function at level} $m$ of $E$ is then defined by taking the logarithmic average
\[
N^{[m]}(r,E)
\,
:=
\,
\int_1^r \frac{n^{[m]}(t, E)}{t}\,\dif\! t
\eqno
{{\scriptstyle (r\,>\,1)}.}
\]
When $m=\infty$, for abbreviation we  write $n(t,E)$, $N(r,E)$ for $n^{[\infty]}(t,E)$, $N^{[\infty]}(r,E)$.

Let $f\colon\mathbb{C}\rightarrow \mathbb{P}^n(\mathbb{C})$ be an entire holomorphic curve having a reduced representation $f=[f_0:\cdots:f_n]$ in the homogeneous coordinates $[z_0:\cdots:z_n]$ of $\mathbb{P}^n(\mathbb{C})$. Let $D=\{Q=0\}$ be a divisor in $\mathbb{P}^n(\mathbb{C})$ defined by a homogeneous polynomial $Q\in\mathbb{C}[z_0,\dots,z_n]$ of degree $d\geq 1$. If $f(\mathbb{C})\not\subset D$, then $f^*D=\sum_{a\in\mathbb{C}}\ord_af^*Q$ is a divisor on $\mathbb{C}$. We then define the \textsl{truncated counting function} of $f$ with respect to $D$ as
\[
N_f^{[m]}(r,D)
\,
:=
\,
N^{[m]}\big(r,f^*D\big),
\]
which measures the intersection frequency of $f(\mathbb{C})$ with $D$. If $f^*D=\sum_i\mu_ia_i$, where $\mu_i>0
$ and $\mu=\min\{\mu_i\}$, then we say that  $f$ is {\sl completely $\mu$--ramified} over $D$, with the convention that $\mu=\infty$ if $f(\mathbb{C})\cap\supp D=\emptyset$. Next,
the \textsl{proximity function} of $f$ associated to the divisor $D$ is given by
\[
m_f(r,D)
\,
:=
\,
\int_0^{2\pi}
\log
\frac{\big\Vert f(re^{i\theta})\big\Vert_{\max}^d\,
	\Vert Q\Vert_{\max}}{\big|Q(f)(re^{i\theta})\big|}
\,
\frac{\dif\!\theta}{2\pi},
\]
where $\Vert Q\Vert_{\max}$ is the maximum  absolute value of the coefficients of $Q$ and where
\begin{equation}
\label{| |max definition}
\big\Vert f(z)\big\Vert_{\max}
:=
\max
\{|f_0(z)|,\dots,|f_n(z)|\}.
\end{equation}
Since $\big|Q(f)\big|\leq
\left(\substack{d+n\\ n}
\right)\,
 \Vert Q\Vert_{\max}\cdot\Vert f\Vert_{\max}^d$, we see that $m_f(r,D)\geq O(1)$ is bounded  from below by some constant.
Lastly, the \textsl{Cartan order function} of $f$ is defined by
\begin{align*}
T_f(r)
:=
\frac{1}{2\pi}\int_0^{2\pi}
\log
\big\Vert f(re^{i\theta})\big\Vert_{\max} \dif\!\theta.
\end{align*}

The Nevanlinna theory is then established by comparing the above three functions. It consists of two fundamental theorems (for a comprehensive exposition, see Noguchi-Winkelmann \cite{Noguchi-Winkelmann2014}).

\begin{namedthm*}{First Main Theorem}\label{fmt} Let $f\colon\mathbb{C}\rightarrow \mathbb{P}^n(\mathbb{C})$ be a holomorphic curve and let $D$ be a hypersurface of degree $d$ in $\mathbb{P}^n(\mathbb{C})$ such that $f(\mathbb{C})\not\subset D$. Then one has the estimate
	\[
	m_f(r,D)
	+
	N_f(r,D)
	\,
	=
	\,
	d\,T_f(r)
	+
	O(1)
	\]
	for every $r>1$,
	whence
	\begin{equation}
	\label{-fmt-inequality}
	N_f(r,D)
	\,
	\leq
	\,
	d\,T_f(r)+O(1).
	\end{equation}
\end{namedthm*}

Hence the counting function is bounded from above by some multiple of the order function. The reverse direction is usually much harder, and one often needs to take the sum of the counting functions of many divisors. Such types of estimates are so-called {\sl second main theorems}.

Throughout this paper, for an entire curve $f,$ the notation $S_f(r)$ means a real function of $r \in \mathbb{R}^+$ such that 
\[
S_f(r) \leq
O(\log(T_f(r)))+ \epsilon \log r
\]
for every positive constant $\epsilon$ and every $r$ outside of a subset (depending on $\epsilon$) of finite Lebesgue measure of $\mathbb{R}^+$. In the case where $f$ is rational, we understand that $S_f(r)=O(1)$. In any case we always have

\[
\liminf_{r\rightarrow\infty}\dfrac{S_f(r)}{T_f(r)}
=
0.
\]

\subsection{Grassmann algebra}
Let $E$ be a $\mathbb{C}$-vector space of dimension $M+1$.
The graded exterior algebra $\Lambda^{\bullet} E=\oplus_{k=0}^{M}\,\Lambda^{k} E$,
equipped with the exterior wedge product, is called a Grassmann algebra.  Every element in $\Lambda^{k}E$ is called a $k$-vector, and it is said to be {\sl decomposable} if it can be written neatly as  $a_1\wedge\dots\wedge a_k$ for some $k$ vectors $a_1,\dots,a_k\in E$. 

Given a basis $\{e_0,\dots,e_M\}$ of $E$, then  $\Lambda^{k+1} E$ has the basis
$\{e_{i_0}\wedge\dots\wedge e_{i_k}\}_{0\,\leq\, i_0\,<\,i_1\,<\,\cdots\,<\,i_k\,\leq\, M}$.
In this coordinate system, for $k+1$ vectors
  $a_i=\sum_{j=0}^{M}a_{i,j} e_j$ where $i=0,\dots, k$, direct  computation shows:
\begin{equation}
\label{wedge product formula}
a_0\wedge\dots\wedge a_k
=
\sum_{0\,\leq\, i_0\,<\,i_1\,<\,\cdots\,<\,i_k\,\leq\, M} a({i_0},\dots,{i_k})\,e_{i_0}\wedge\dots\wedge e_{i_k},
\end{equation}
where 
$
a({i_0},\dots,{i_k})
:=
\det
\big(
(a_{\alpha,i_{\beta}})_{0\,\leq\,\alpha,\,\beta\,\leq\, k}
\big)
$.

\subsection{Derived curves}
\label{Derived curves}
Let $f:\mathbb{C}\rightarrow\mathbb{P}^n(\mathbb{C})$ be a linearly nondegenerate entire holomorphic curve with a reduced representation $f=[f_0:\dots:f_n]$ in the homogeneous coordinates. Note that
  \begin{equation}
  \label{lifting f}
  \widetilde{f}:=(f_0,\dots,f_n):\mathbb{C}\rightarrow\mathbb{C}^{n+1}\setminus\{0\}
  \end{equation}
   provides a lifting of $f$ along the natural projection  $\pi:\mathbb{C}^{n+1}\setminus\{0\}\rightarrow\mathbb{P}^n(\mathbb{C})$.  For $k=1,\dots,n$, to construct a $k^{\text{th}}$-derived curve  we first collect all the derivatives 
\begin{equation}
\label{lift of f, derivatives}
\widetilde{f}^{(\ell)}=(f_0^{(\ell)},\dots,f_n^{(\ell)})
:\ \ \mathbb{C} \ \ \longrightarrow
\ \ \mathbb{C}^{n+1}
\ \ \ \ \ \ \ \ \ \ \ \ \ \ \ 
{\scriptstyle(0\,\leq\, \ell\leq\, k),}
\end{equation}
 up to order $k$,
and then take their wedge product
\[
\widetilde{F}_k:=\widetilde{f}^{(0)}\wedge\cdots\wedge\widetilde{f}^{(k)}
\  :
\ \ \mathbb{C}\ \
\longrightarrow\ \ 
\Lambda^{k+1}(\mathbb{C}^{n+1}).
\]
Relating to the standard  basis
$\{e_i\}_{i\,=\,0,\dots,\, n}$ of $\mathbb{C}^{n+1}$,  by~\eqref{wedge product formula} there holds
\begin{equation}
\label{Plucker coordinates of F_k}
\widetilde{F}_k
=
\sum_{0\,\leq\, i_0\,<\,i_1\,<\,\dots\,<\,i_k\,\leq\, n} W(f_{i_0},\cdots,f_{i_k})\,e_{i_0}\wedge\dots\wedge e_{i_k}
\end{equation}
in Pl\"ucker coordinates,
where 
$
W(f_{i_0},\dots,f_{i_k})
:=
\det
\big(
(f_{i_{\beta}}^{(\alpha)})_{\alpha,\,\beta\,=\,0,\dots,\, k }
\big)
$
is a standard Wronskian. For the purpose of descending the image of $\widetilde{F}_k$ 
along the natural projection
\[
\pi: \Lambda^{k+1}(\mathbb{C}^{n+1})\setminus \{0\}
\rightarrow
\mathbb{P}
\Big(
\Lambda^{k+1}(\mathbb{C}^{n+1})
\Big),
\] 
we now cancel the common zeros of all the obtained Wronskians by an auxiliary holomorphic function $g$ satisfying
\[
(g)_0=\min_{0\,\leq\, i_0\,<\,i_1\,<\,\cdots\,<\,i_k\,\leq\, n}
\Big(
W(f_{i_0},\dots,f_{i_k})
\Big)_0.
\]
Hence the quotient succeeds
\[
\overline{F}_k:=\widetilde{F}_k/g \ \ :\ \ 
\mathbb{C}\ \
\longrightarrow\ \ 
\Lambda^{k+1}(\mathbb{C}^{n+1})\setminus \{0\}.
\] 

\begin{defi}
	The $k^{\text{th}}$-{\sl derived curve} of $f$ is
	\[
	F_k:=\pi\circ\overline{F}_k:\mathbb{C}\longrightarrow
	\mathbb{P}
	\Big(
	\Lambda^{k+1}(\mathbb{C}^{n+1})
	\Big).
	\]
\end{defi}

Recall that the {\sl Cartan's order function} of $F_k$ is given by
\begin{align*}
T_{F_k}(r)
&=
\dfrac{1}{2\pi}
\int_0^{2\pi}
\log\|F_k(re^{i\theta})\|_{\max}
\dif\theta\\
&=
\dfrac{1}{2\pi}
\int_0^{2\pi}
\log
\max_{0\,\leq\, i_0\,<\,i_1\,<\,\cdots\,<\,i_k\,\leq\, n}
\bigg|\dfrac{W(f_{i_0},\dots,f_{i_k})}{g}(re^{i\theta})\bigg|\dif\theta.
\end{align*}

It is known that all the derived curves have the same growth rate (see e.g. \cite{Fujimoto1993}):
\begin{equation}
\label{same growth rate for derived curves}
T_{F_k}=
O(T_{F_{\ell}})
\ \ \ \ \ \ \ \ \ \ 
{\scriptstyle(\,0\,\leq\, k,\,\ell\,\leq\, n\,)}.
\end{equation}

A {\sl decomposable hyperplane}
\[
A:=
\pi\,
\{Z\in\Lambda^{k+1}(\mathbb{C}^{n+1})\
: \ 
A^*(Z)=0\}
\subset \mathbb{P}
\big(
\Lambda^{k+1}(\mathbb{C}^{n+1})
\big)
\]
is the dual of
 a nonzero decomposable $(k+1)$-vector 
 \[
 A^*=a_0\wedge\dots\wedge a_k
 \ \ 
 \in\ \ \Lambda^{k+1}(\mathbb{C}^{n+1})^{\vee}
 \cong (\Lambda^{k+1} \mathbb{C}^{n+1})^{\vee}
 \eqno
 {\scriptstyle (\,a_0,\,\dots,\,a_k\,\in\,(\mathbb{C}^{n+1})^{\vee}\,)}.
 \]

 We claim that the image of the derived curve $F_k(\mathbb{C})$ is not contained in any decomposable hyperplane. Indeed, for any $A^*$ above, writing each $a_i$ in the standard dual basis $\{e_j^*\}_{j\,=\,0,\dots,\, n}$ of $(\mathbb{C}^{n+1})^{\vee}$ 
 as
 $a_i=\sum_{j=0}^n a_{i,j}e_j^*$,  by formula~\eqref{wedge product formula} we have
 \begin{align}
 \label{use Cauchy-Binet formula}
 A^*(\widetilde{F}_k)
 &
 =
 \sum_{0\,\leq\, i_0\,<\,i_1\,<\,\dots\,<\,i_k\,\leq\, n}
 \det
 \big(
 (a_{\alpha,\,i_{\beta}})_{0\,\leq\,\alpha,\,\beta\,\leq\, k}
 \big)\,
 e_{i_0}^*\wedge\dots\wedge e_{i_k}^*(\widetilde{F}_k)
 \\
 \text{[recall~\eqref{Plucker coordinates of F_k}]}
 \ \ \ \ \ \ \
 &
 =
 \sum_{0\,\leq\, i_0\,<\,i_1\,<\,\dots\,<\,i_k\,\leq\, n} 
 \det
 \big(
 (a_{\alpha,\,i_{\beta}})_{0\,\leq\,\alpha,\,\beta\,\leq\, k}
 \big)
 \,
 \det
 \big(
 (f_{i_{\beta}}^{(\alpha)})_{0\,\leq\,\alpha,\,\beta\,\leq\, k}
 \big)
 \notag
 \\
 \text{[by Cauchy-Binet Formula]}
 \ \ \ \ \ \ \
 &
 =
 \det
 \big(
 (h_{{\beta}}^{(\alpha)})_{0\,\leq\,\alpha,\,\beta\,\leq\, k}
 \big),
 \notag
 \end{align}
 where each $h_i=\sum_{j=0}^n a_{i,\,j} f_j=a_i\circ \widetilde{f}$ for $i=0,\dots ,k$.
 The linearly independence of  $f_0,\dots, f_n$  as well as that of $a_0,\dots, a_{k}$
 guarantee that $h_0,\dots, h_k$ are also linearly independent, whence the Wronskian 
 \[\det
 \big(
 (h_{{\beta}}^{(\alpha)})_{0\,\leq\,\alpha,\,\beta\,\leq\, k}
 \big)\not\equiv 0,
 \]
 i.e., 
 $F_k(\mathbb{C})$ is not contained in the decomposable hyperplane 
 $A$ defined by $A^*$.

 Therefore, we define the  $m$--truncated {\sl counting function} of $F_k$ with respect to  $A$ as
\[
N_{F_k}^{[m]}(r,A):=N^{[m]}(r,(A^*\circ \overline{F}_k)_0).
\]
The {\sl $m$--defect} of $F_k$ with respect to $A$ is then defined by 
\[
\delta^{[m]}_{F_k}(A):=\liminf_{r\rightarrow\infty}
\bigg(1-\dfrac{N_{F_k}^{[m]}(r,A)}{T_{F_k}(r)}\bigg),
\]
which according to the  First Main Theorem satisfies
$
0\leq \delta^{[m]}_{F_k}(A)\leq 1
$.

\subsection{ Nochka's weights}
 Let $N\geq n$ be two positive integers.
Let $H_1,\dots, H_q
\subset \mathbb{P}^n(\mathbb{C})$ be   $q\geq N+1$ hyperplanes defined by the linear
forms $h_1^*,\dots,h_q^*\in
(\mathbb{C}^{n+1})^{\vee}$, respectively.

\smallskip\noindent
{\bf Conventions.}
Denote by $Q$ the index set $\{1,\dots,q\}$. For a subset $R\subset Q$, denote by $|R|$ its cardinality and by $\rank(R)$ the dimension of the linear subspace of
$(\mathbb{C}^{n+1})^{\vee}$ generated by $\{h_i\}_{i\in R}$.

\begin{defi}
	The family $\{H_i\}_{i\,=\,1,\dots,\, q}$ is said to be in $N$-{\sl subgeneral position} if any $N+1$ hyperplanes in this family have {\it empty} intersection. When $N=n$, this family is said to be in {\sl general position}.
\end{defi}

\begin{rem}
	\label{explain why main theorem implies thm 1}
Keeping the assumptions as in the statement of Theorem~\ref{Generalization of Cartan-Nochka's Second Main Theorem}, we may regard $f$ as a linearly non-degenerate curve $f:\mathbb{C}\rightarrow\mathbb{P}^n(\mathbb{C})\hookrightarrow\mathbb{P}^N(\mathbb{C})$, which induces the derived curve 
	\[
	F_k:\mathbb{C}\rightarrow\mathbb{P}^{\mathfrak{n}-1}(\mathbb{C})\hookrightarrow\mathbb{P}^{\mathfrak{N}-1}(\mathbb{C}),\]
	 where $	\mathfrak{N}=\begin{pmatrix}
		N+1\\k+1
	\end{pmatrix}
	$ and $
	\mathfrak{n}=
	\begin{pmatrix}
		n+1\\k+1
	\end{pmatrix}$. Still using $A_i$ to denote the cut loci $A_i\cap \mathbb{P}^{\mathfrak{n}-1}(\mathbb{C})$, then $\{A_i\}_{i\in Q}$ is a family of $q$ hyperplanes in $\big(\mathfrak{N}-1\big)$--subgeneral position of $\mathbb{P}^{\mathfrak{n}-1}(\mathbb{C})$. Hence Theorem~\ref{Generalization of Cartan-Nochka's Second Main Theorem} is a direct consequence of the Main Theorem.
\end{rem}

Here is the main tool in Nochka's  resolution~\cite{Nochka1983} of Cartan's conjecture.

\begin{thm}
	\label{Nochka weights and constant}
	Let $\{H_i\}_{i\,=\,1,\dots,\, q}$ be a family of $q\geq 2N-n+1$ hyperplanes in $N$-subgeneral position of $\mathbb{P}^n(\mathbb{C})$. Then there exists a family of rational constants $\{\omega(i)\}_{i\,=\,1,\dots,\, q}$ satisfying the following conditions:
	\begin{itemize}
		\item[i)] $0\leq\omega(i)\leq 1$ for all $i=1,\dots, q$;
		\item[ii)] set $\widetilde{\omega}:=\max_{1\,\leq\, i\,\leq\, q}\omega(i)$, then
		\[
		\sum_{i=1}^q\omega(i)
		=
		\widetilde{\omega}(q-2N+n-1)+n+1;
		\]
		\item[iii)]  $\frac{n+1}{2N-n+1}
		\leq \widetilde{\omega}\,\leq\frac{n}{N};$
		\item[iv)] if $R$ is a subset of $Q$ with $0< |R|\leq N+1$, then 
		\begin{equation}
		\label{Nochka rank inequatlity}
		\sum_{i\in R}\omega(i)\leq\rank(R).
		\end{equation}
	\end{itemize}
\end{thm}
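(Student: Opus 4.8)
This is a purely combinatorial statement: only the ranks $r(R):=\rank(R)$ of subsets $R$ of the index set $Q$ intervene, where $r$ is the matroid rank function of the linear forms $\{h_i\}$ inside $(\mathbb{C}^{n+1})^{\vee}$. So the plan is to forget the projective geometry and reason with the abstract function $r$, which is integer-valued, monotone, submodular ($r(R\cup S)+r(R\cap S)\le r(R)+r(S)$), and normalized by $r(\{i\})=1$ and $r(Q)=n+1$; the $N$-subgeneral position hypothesis says precisely that $|R|-r(R)\le N-n$ whenever $|R|\le N+1$, equivalently that $r(R)=n+1$ as soon as $|R|\ge N+1$, and after discarding repeated hyperplanes we may also assume $r(R)\ge 2$ whenever $|R|\ge 2$. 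The case $N=n$ is immediate---take $\omega\equiv 1$ and $\tilde\omega=1$, for which i)--iii) are trivial and iv) is just general position---so assume $N>n$; it then suffices to exhibit $\tilde\omega$ lying in $\bigl[\tfrac{n+1}{2N-n+1},\tfrac{n}{N}\bigr]$, since $\tfrac{n}{N}<1$ makes property i) automatic from $0\le\omega(i)\le\tilde\omega$.

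The engine is the \emph{Nochka diagram}, a greedy construction that I would analyse by induction on $|Q|$. First build a maximal flag $\emptyset=Q_0\subsetneq Q_1\subsetneq\cdots\subsetneq Q_m=Q$ in which each $Q_j$ is a maximal set, among those $R\supsetneq Q_{j-1}$ with $r(R)>r(Q_{j-1})$ realizing the largest possible value of the layer ratio $\bigl(|R|-|Q_{j-1}|\bigr)\big/\bigl(r(R)-r(Q_{j-1})\bigr)$; the maximality built into this choice, together with submodularity of $r$, forces the successive layer ratios to be non-increasing. Assign to the hyperplanes of the first layer $Q_1$ common rational weights calibrated so that $\sum_{i\in T}\omega(i)\le r(T)$ for every $T\subseteq Q_1$ with equality at $T=Q_1$, then contract the matroid: pass to $Q':=Q\setminus Q_1$ with the again normalized, submodular rank function $r'(S):=r(S\cup Q_1)-r(Q_1)$, which---this is where maximality of $Q_1$ is used---is in $N'$-subgeneral position for strictly smaller parameters $N'<N$, $n'<n$. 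By the inductive hypothesis $Q'$ carries compatible weights; concatenate them with the first-layer weights and set $\tilde\omega:=\max_i\omega(i)$. The two-sided bound iii) then follows by applying the subgeneral-position inequality to the flag and invoking the monotonicity of the layer ratios.

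It remains to verify iv) and ii) for the assembled system. For iv), given $R$ with $|R|\le N+1$ I would split $R=(R\cap Q_1)\sqcup(R\setminus Q_1)$ and estimate
\[
\sum_{i\in R}\omega(i)=\sum_{i\in R\cap Q_1}\omega(i)+\sum_{i\in R\setminus Q_1}\omega(i)\le r(R\cap Q_1)+r'(R\setminus Q_1)=r(R\cap Q_1)+r(R\cup Q_1)-r(Q_1),
\]
using the calibration of the first-layer weights for the first summand and the inductive case of iv) for the second; submodularity $r(R\cap Q_1)+r(R\cup Q_1)\le r(R)+r(Q_1)$ then closes it to $\sum_{i\in R}\omega(i)\le r(R)$. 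Property ii), the exact identity $\sum_i\omega(i)=\tilde\omega(q-2N+n-1)+n+1$, should fall out of the recursion through $\sum_Q\omega=|Q_1|\,w_1+\sum_{Q'}\omega'$ once one checks the arithmetic of the parameter transformation $(q,N,n)\mapsto(|Q'|,N',n')$, namely that $q-2N+n-1$ and $|Q'|-2N'+n'-1$ line up after accounting for $|Q_1|$ and $r(Q_1)$. I expect the genuine obstacle to be precisely this numerology in the contraction step---checking that the contracted matroid inherits $N'$-subgeneral position with exactly the right $N'$ and $n'$ (the naive count is off by one, and maximality of $Q_1$ is what repairs it), and that the Nochka constant produced by the recursion really is the maximum weight, so that iii) and ii) propagate. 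Everything else reduces to submodularity of $r$ and elementary estimates on ratios of positive integers.
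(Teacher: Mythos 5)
The paper does not prove this theorem at all: it is Nochka's classical result, quoted verbatim and cited to Nochka~\cite{Nochka1983} and to Noguchi--Winkelmann~\cite{Noguchi-Winkelmann2014}, with no \texttt{proof} environment attached. So there is no internal argument to compare your attempt against; the referee simply needs to know that the statement is a standard black box in the literature.

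Judged on its own, your sketch heads in the right general direction (treat $\rank$ as an abstract matroid rank function, build a greedy flag, contract, and recurse), which is broadly how the published proofs of Nochka's theorem proceed. But the induction as you have set it up has a real gap in step~iv). You split $R=(R\cap Q_1)\sqcup(R\setminus Q_1)$ and apply the inductive case of iv) to $R\setminus Q_1$ inside the contracted matroid on $Q'$. That inductive case is only available when $|R\setminus Q_1|\le N'+1$, where $N'=N-|Q_1|$. When $R$ is disjoint from $Q_1$ and $|R|=N+1$, you have $|R\setminus Q_1|=N+1>N'+1$, so the hypothesis you are invoking does not apply. You would need to treat the large-$|R\setminus Q_1|$ regime separately (there $r'(R\setminus Q_1)=n'+1$, so the bound you need is $\sum_{i\in R\setminus Q_1}\omega(i)\le n'+1$, which is not the same as the $\le\rank$ inequality and does not come for free from the recursion). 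Likewise, the exact identity in ii) does not drop out of the naive concatenation $\sum_Q\omega=|Q_1|\,w_1+\sum_{Q'}\omega'$: writing out $q'-2N'+n'-1=(q-2N+n-1)+(|Q_1|-\rank(Q_1))$, one would need $\tilde\omega'\bigl[(q-2N+n-1)+(|Q_1|-\rank(Q_1))\bigr]=\tilde\omega\,(q-2N+n-1)$, and there is no reason for the two Nochka constants $\tilde\omega$ and $\tilde\omega'$ to align this way. You flag both of these as the ``genuine obstacle,'' which is honest, but it means the sketch is not yet a proof; the actual constructions of Chen, Fujimoto, Ru, and Noguchi--Winkelmann do not proceed by this simple contraction-induction, precisely because of these failures. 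Since the paper treats the theorem as known, citing one of those references is the appropriate move here rather than reproving it.
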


The constants $\omega(j)$ are called {\sl Nochka's weights} and $\widetilde{\omega}$ is called {\sl Nochka's constant} of the family $\{H_i\}_{i\,=\,1,\dots,\, q}$. They satisfy the following key property (c.f. \cite[Lem. 4.1.17]{Noguchi-Winkelmann2014}).

\begin{pro}
	\label{Nochka estimate}
		Let $\{H_i\}_{1\leq i\leq q}$ be a family of $q\geq 2N-n+1$ hyperplanes in $N$-subgeneral position of $\mathbb{P}^n(\mathbb{C})$. Let $a_1,\dots, a_q\geq 1$ be arbitrary constants. If $R$ is a subset of $Q$ having cardinality \[
		0<|R|\leq N+1,
		\]
	 then there exist distinct indices $i_1,\dots,i_{\rank(R)}\in R$ such that 
	 \[
	 \rank(\{i_1,\dots,i_{\rank(R)}\})=\rank(R)
	  \quad\text{and}
	  \quad
	\prod_{i\in R}\alpha_i^{\omega(i)}
		\leq
		\prod_{k=1}^{\rank(R)}\alpha_{i_k}.
		\]
\end{pro}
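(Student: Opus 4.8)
The plan is to prove Proposition~\ref{Nochka estimate} as a purely combinatorial consequence of the Nochka rank inequality~\eqref{Nochka rank inequatlity} from Theorem~\ref{Nochka weights and constant}, by a greedy selection argument. Fix the subset $R\subset Q$ with $0<|R|\leq N+1$ and write $r:=\rank(R)$. First I would order the indices of $R$ as $j_1,j_2,\dots,j_{|R|}$ so that the weights are non-increasing, i.e. $\omega(j_1)\geq\omega(j_2)\geq\cdots\geq\omega(j_{|R|})$; this is the crucial sorting step that lets the weights be matched against the ``cheapest'' available coordinates. Then I would greedily pick a subset $S=\{i_1,\dots,i_r\}\subset R$ by scanning $j_1,j_2,\dots$ and adding an index to $S$ exactly when doing so strictly increases the rank of the currently selected set; since $\rank(R)=r$, this process terminates having chosen exactly $r$ indices, and plainly $\rank(S)=r$, giving the first assertion.

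The heart of the matter is the inequality $\prod_{i\in R}\alpha_i^{\omega(i)}\leq\prod_{k=1}^{r}\alpha_{i_k}$. Since each $\alpha_i\geq 1$, taking logarithms reduces this to showing $\sum_{i\in R}\omega(i)\log\alpha_i\leq\sum_{k=1}^{r}\log\alpha_{i_k}$. The mechanism is an Abel-summation / rearrangement argument exploiting that the $\omega(i)$ are sorted in decreasing order while the selected indices $i_1,\dots,i_r$ occur ``as early as possible'' in that order. Concretely, for each prefix $R_t:=\{j_1,\dots,j_t\}$ of $R$, the greedy choice guarantees that the number of selected indices among $\{j_1,\dots,j_t\}$ equals $\rank(R_t)$; combining this with the Nochka rank inequality $\sum_{i\in R_t}\omega(i)\leq\rank(R_t)=|S\cap R_t|$ for every prefix, and then summing against the non-increasing ``weights'' $\log\alpha_{j_1}\geq\cdots$ is not automatic because the $\alpha$'s need not be sorted. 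The fix is to observe that each selected $i_k$ is the $k$-th index (in the sorted-by-$\omega$ order) at which the rank jumps, so $i_k=j_{t_k}$ with $t_k\leq$ (the position where the $k$-th unit of ``rank budget'' becomes available); pairing the mass $\omega(i)$ at position $t$ with the slot $\log\alpha_{i_{\lceil\cdot\rceil}}$ and using $\sum_{i\in R_t}\omega(i)\leq|S\cap R_t|$ as a majorization statement between the measure $\sum_{i\in R}\omega(i)\delta_i$ and the measure $\sum_{k}\delta_{i_k}$ (both viewed on the totally ordered set $R$), one gets $\sum_{i\in R}\omega(i)c_i\leq\sum_{k=1}^r c_{i_k}$ for any function $i\mapsto c_i$ that is constant-or-decreasing along the selection — and here we simply take $c_i=\log\alpha_i$ after re-sorting $R$ so that within each ``rank block'' the $\alpha$'s are arranged with the largest ones on the selected indices, which is legitimate since the selected index in a block can be taken to be any spanning representative.

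More cleanly: after fixing the sorted-by-$\omega$ order, define $i_k$ greedily as above; then for each $k$ one has $\sum\{\omega(j):j\in R,\ j\preceq i_k\}\leq k$ by the Nochka inequality applied to the prefix ending at $i_k$ (its rank is exactly $k$). Writing $\Omega_k$ for the total $\omega$-mass strictly between $i_{k-1}$ and $i_k$ plus $\omega(i_k)$ itself, we get $\Omega_1+\cdots+\Omega_k\leq k$ for all $k$, while the ``tail'' mass after $i_r$ contributes to no new rank and can be absorbed because $\log\alpha\geq 0$. Since $\log\alpha_{i_1}\geq\log\alpha_{i_2}\geq\cdots$ can be arranged (by choosing, within each rank-block, the representative $i_k$ to have the largest $\alpha$ among block members consistent with the rank jump — possible because any nonzero-rank-contributing member of the block spans equally), a standard partial-summation inequality $\sum_k\Omega_k\,b_k\leq\sum_k b_k$ with $b_k:=\log\alpha_{i_k}$ non-increasing and $\sum_{k\leq m}\Omega_k\leq m$ yields exactly $\sum_{i\in R}\omega(i)\log\alpha_i\leq\sum_{k=1}^r\log\alpha_{i_k}$.

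The step I expect to be the main obstacle is precisely reconciling the two orderings: the Nochka rank inequality forces us to control prefixes in the $\omega$-decreasing order, whereas the target inequality wants the selected slots to carry the largest $\alpha$-values. Verifying that the greedy selection can always be taken to pick, within each rank-increment, an index whose $\alpha$ is maximal among the then-available spanning candidates — and that this choice does not destroy the prefix-rank bookkeeping — is the delicate point; everything else is Abel summation plus monotonicity of $\log$ on $[1,\infty)$.
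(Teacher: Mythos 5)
The paper does not prove Proposition~\ref{Nochka estimate}; it is cited directly from \cite[Lem.\ 4.1.17]{Noguchi-Winkelmann2014}, so I am comparing against the standard argument. Your ingredients are the right ones — a greedy rank-increasing selection, the Nochka rank inequality~\eqref{Nochka rank inequatlity} on prefixes, and Abel summation — but you have sorted by the wrong quantity, and the ``fix'' you describe does not repair it. The ordering of $R$ should be by \emph{non-increasing $\alpha$-value}, not by non-increasing weight $\omega$. Since every subset of $R$ also has cardinality $\leq N+1$, the rank inequality~\eqref{Nochka rank inequatlity} applies to prefixes of \emph{any} ordering, so there is nothing lost in abandoning the $\omega$-order. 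List $R$ as $j_1,\dots,j_{|R|}$ with $\alpha_{j_1}\geq\cdots\geq\alpha_{j_{|R|}}\geq 1$, greedily take $i_1,\dots,i_r$ to be the rank-increasing indices in that order, set $c_t=\log\alpha_{j_t}\geq 0$, and use $\sum_{s\leq t}\omega(j_s)\leq\rank(\{j_1,\dots,j_t\})=|\{k:i_k\in\{j_1,\dots,j_t\}\}|$ for each $t$. Because $c_t$ is genuinely non-increasing, Abel summation closes the argument at once: $\sum_t\omega(j_t)c_t\leq\sum_{k}c_{i_k}$. This is the single missing idea.

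Your $\omega$-sorted order does not make the $\alpha$'s monotone, and re-sorting ``within rank blocks'' cannot create the global monotonicity that the partial-summation inequality $\sum_k\Omega_k b_k\leq\sum_k b_k$ actually uses. Worse, a large-$\alpha$ index can sit in the tail of the $\omega$-order and fail to be a spanning candidate for any block, so your selection rule can miss it entirely. Concretely, let $h_1,h_2,h_3$ be independent and $h_4\in\mathrm{span}(h_1,h_2)$ (distinct from both), so $\rank(R)=3$ and $\rank(\{1,2,4\})=2$; take $\omega(1)=0.7$, $\omega(2)=0.6$, $\omega(3)=\omega(4)=0.5$, which satisfy~\eqref{Nochka rank inequatlity} for every subset (in particular $\omega(1)+\omega(2)+\omega(4)=1.8\leq 2$), and $\alpha_1=\alpha_2=\alpha_3=1$, $\alpha_4=1000$. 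In the $\omega$-sorted order the greedy forces $\{i_1,i_2,i_3\}=\{1,2,3\}$: index $4$ lies in the tail and cannot replace $i_3$ since $\rank(\{1,2,4\})=2\neq 3$. Then $\prod_k\alpha_{i_k}=1$ while $\prod_i\alpha_i^{\omega(i)}=1000^{0.5}\approx 31.6$, and your inequality is violated. The $\alpha$-sorted greedy instead selects $4$ first (then, say, $1$ and $3$), giving $\prod_k\alpha_{i_k}=1000$, and the Abel step is sound because the selected $\alpha$'s are non-increasing by construction.
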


\section{Proof of the Main Theorem}
\label{proof of the Main Theorem}

\subsection{Notation and conventions}
Fix a reduced representation $[f_0:\dots:f_n]$ of $f$.
Denote by $Q=\{1,\dots,q\}$. Assume that the decomposable hyperplanes $A_1, \dots, A_q$ are defined by  $A_1^*, \dots, A_q^*\in \Lambda^{k+1}(\mathbb{C}^{n+1})^{\vee}$, respectively.   
Let $\mathcal{S}$ be the set consisting of all subsets of $\{0,\dots,n\}$ having cardinality $k+1$, which in the lexicography order writes as
$
\mathcal{S}=
\{
I_0, I_1,\dots,I_{\mathfrak{n}-1}
\}
$. For every $I\in\mathcal{S}$, denote by $\|I\|$
its number of ranking,
so that $\|I_{i}\|=i+1$ for $0\leq i\leq\mathfrak{n}-1$.
For $I,J\in\mathcal{S}$, denote by $W(I,J)$ the determinant of the matrix $\big(f_j^{(i)}\big)_{i\,\in\, I,\, j\,\in\, J}$. Hence $W(I_0,J)$ coincides with the usual Wronskian $W(\{f_j\}_{j\in J})$. 
Let $\mathcal{W}=\big(W(I_r,I_s)\big)_{0\,\leq\, r,\,s\,\leq\, \mathfrak{n}-1}$ be the $(k+1)^{\text{th}}$-compound matrix of $\big(f_j^{(i)}\big)_{
	0\,\leq\, i,\,j\,\leq\, n}$.
Then the Sylvester--Franke theorem states that
\begin{align}
\label{Sylvester-Franke theorem application}
\det\mathcal{W}=W(f_0,\dots,f_n)^{\left(\substack{n\\k}\right)}.
\end{align}
Hence the zero order of $\det\mathcal{W}$ is well-defined, invariant under coordinate changes. In fact, its estimation will be a major challenge in this paper, and we will use some elaborate  coordinate system.

\subsection{An a priori estimate}
From now on, we assume that $q
-
2
\mathfrak{N}
+
\mathfrak{n}>0$, otherwise there is nothing to prove in the Main Theorem.

 Let $\{\omega(i)\}_{i\in Q}$  be the Nochka's weights and let $\widetilde{\omega}$ be the Nochka's constant of the family $\{A_i\}_{i\in Q}$. 
Recalling the construction of the derived curve $F_k$, we first
find some holomorphic function
$g$ whose zero divisor is
\begin{equation}
\label{D_k}
\mathcal{D}_k:=\min_{J\in\mathcal{S}}(W(I_0,J))_0.
\end{equation}

Here is an implement of Cartan's Wronskian technique and Nochka's estimate for derived curves.

\begin{pro}
	There exists some constant $C>0$ depending only on the family $\{A_i\}_{i\in Q}$ such that
	\begin{align}
\label{estimation of norm, statement}
\|F_k(z)\|_{\max}^{\widetilde{\omega}\big(q-2\,\mathfrak{N}+\mathfrak{n}\big)}
&\leq
C
\cdot
\bigg(
\dfrac{|g(z)|^{\mathfrak{n}}\prod_{i\in Q}| A_i^*\circ \overline{F}_k(z)|^{\omega(i)}}{|\det(\mathcal{W}(z))|}
\bigg)\notag\\
&\times\sum_{R\subset Q,\,\rank(R)=|R|=\mathfrak{n}}
\dfrac{|\det(\mathcal{W}(z))|}{\prod_{i\in R}|A_i^*\circ \widetilde{F}_k(z)|}.
	\end{align}
\end{pro}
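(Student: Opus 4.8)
The plan is to imitate Cartan's classical argument for the Second Main Theorem, but working with the derived curve $F_k$ and the $\binom{n+1}{k+1}$ Wronskians $W(I_0,J)$ as its homogeneous coordinates, and then to insert Nochka's weights at the key combinatorial step. First I would fix a point $z\in\mathbb{C}$ and relabel the hyperplanes so that, near $z$, the quantities $|A_1^*\circ\overline F_k(z)|\le|A_2^*\circ\overline F_k(z)|\le\cdots\le|A_q^*\circ\overline F_k(z)|$; since any $\mathfrak N$ of the $A_i$ have empty intersection, the complement of these values is controlled — precisely, for the "small" indices, $\|F_k(z)\|_{\max}$ is comparable (up to a constant depending only on the family) to the maximum of $|A_i^*\circ\overline F_k(z)|$ over the $i$ outside any fixed set of size $\mathfrak N-1$. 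This is the standard estimate that bounds $\|F_k\|_{\max}$ from above by (a constant times) $|A_i^*\circ\overline F_k|$ for all but at most $\mathfrak N-1$ indices.

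Next I would form, for a suitable subset $R\subset Q$ with $|R|=\mathfrak n$ and $\operatorname{rank}(R)=\mathfrak n$, the determinant identity expressing $\det\mathcal W$ (the compound matrix, whose entries are the $W(I_r,I_s)$) in terms of the functions $A_i^*\circ\widetilde F_k$ for $i\in R$: because each $A_i^*\circ\widetilde F_k$ is, by the Cauchy–Binet computation in~\eqref{use Cauchy-Binet formula}, a fixed linear combination of the Plücker coordinates $W(I_0,J)$ of $\widetilde F_k$ and its derivatives, one gets a Wronskian-type identity $\det\mathcal W = c_R\cdot W\big((A_i^*\circ\widetilde F_k)_{i\in R}\big)$ up to a nonzero constant $c_R$ depending only on the $A_i^*$. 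Solving for $\|F_k\|_{\max}^{\mathfrak n}$ (using $\|F_k\|_{\max}\asymp\|\widetilde F_k/g\|_{\max}$) and expanding the Wronskian by Laplace/logarithmic-derivative manipulation, one obtains, for each such $R$,
\[
\|F_k(z)\|_{\max}^{\mathfrak n}\,\le\, C\,\frac{|g(z)|^{\mathfrak n}}{|\det\mathcal W(z)|}\prod_{i\in R}|A_i^*\circ\overline F_k(z)|\cdot\frac{|\det\mathcal W(z)|}{\prod_{i\in R}|A_i^*\circ\widetilde F_k(z)|},
\]
where the extra logarithmic-derivative factors are absorbed into $S_{F_k}$ later (here at the pointwise level they are harmless).

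Then comes the combinatorial heart of the matter: I would raise the basic pointwise bound $\|F_k\|_{\max}\le C\,|A_i^*\circ\overline F_k|$ (valid for all $i$ outside a set of size $\le\mathfrak N-1$) to the power $\omega(i)$ and take the product over $i\in Q$. By part ii) of Theorem~\ref{Nochka weights and constant}, $\sum_{i\in Q}\omega(i)=\widetilde\omega(q-2\mathfrak N+\mathfrak n)+\mathfrak n$, so the left side becomes $\|F_k\|_{\max}^{\widetilde\omega(q-2\mathfrak N+\mathfrak n)+\mathfrak n}$; transferring $\|F_k\|_{\max}^{\mathfrak n}$ to the right via the displayed inequality above, and using Proposition~\ref{Nochka estimate} to dominate $\prod_{i\in Q}|A_i^*\circ\overline F_k|^{\omega(i)}$ by $\prod_{k=1}^{\mathfrak n}|A_{i_k}^*\circ\overline F_k|$ for some rank-$\mathfrak n$ (hence index-$\mathfrak n$, since $\operatorname{rank}$ is capped by $\mathfrak n$) subset $\{i_1,\dots,i_{\mathfrak n}\}$, yields exactly the asserted inequality~\eqref{estimation of norm, statement}, the sum over $R$ arising because the optimal subset depends on $z$. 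The main obstacle I anticipate is the bookkeeping in the second step — ensuring that, after canceling the common factor $g$ and passing between $\widetilde F_k$, $\overline F_k$ and $F_k$, the Wronskian identity $\det\mathcal W \asymp W\big((A_i^*\circ\widetilde F_k)_{i\in R}\big)$ holds with a genuinely constant (family-dependent, $z$-independent) factor, and that the index set produced by Proposition~\ref{Nochka estimate} really does have full rank $\mathfrak n$ so that the corresponding $A_i^*\circ\widetilde F_k$ are linearly independent and the Wronskian does not vanish identically; everything else is a routine combination of Cartan's technique with the Nochka weight inequalities.
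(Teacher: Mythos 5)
Your proposal goes astray in its middle step, and as a result never quite reaches the target inequality. The paper's actual argument for this proposition is considerably simpler than what you sketch: it is a \emph{pointwise} inequality on $\mathbb{P}^{\mathfrak n-1}(\mathbb{C})$ established by compactness, with the Wronskian determinant $\det\mathcal{W}$ inserted afterwards as a trivial factor $1=|\det\mathcal{W}|/|\det\mathcal{W}|$ for later bookkeeping. No identity relating $\det\mathcal{W}$ to a Wronskian of the functions $A_i^*\circ\widetilde F_k$ is needed or used here.

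Two concrete errors. First, the displayed inequality
\[
\|F_k(z)\|_{\max}^{\mathfrak n}\,\le\, C\,\frac{|g(z)|^{\mathfrak n}}{|\det\mathcal W(z)|}\prod_{i\in R}|A_i^*\circ\overline F_k(z)|\cdot\frac{|\det\mathcal W(z)|}{\prod_{i\in R}|A_i^*\circ\widetilde F_k(z)|}
\]
collapses algebraically: since $A_i^*\circ\widetilde F_k=g\cdot(A_i^*\circ\overline F_k)$ and $|R|=\mathfrak n$, the right-hand side equals $C$ identically, so this would assert $\|F_k\|_{\max}^{\mathfrak n}\le C$, which is false. Second, the claimed identity $\det\mathcal W=c_R\cdot W\big((A_i^*\circ\widetilde F_k)_{i\in R}\big)$ does not hold with a constant $c_R$. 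What is true (and is used in the paper, but only later, in the proof of the $\psi$-estimate Proposition~\ref{psi estimate}) is $\det\mathcal W=\det(\mathsf C^{-1})\cdot\det\widetilde{\mathcal W}$ where $\widetilde{\mathcal W}=\mathcal W\cdot\mathsf C$ has $(r,j)$-entry $A_j^*\circ\widetilde F^{I_r}$; the vectors $\widetilde F^{I_r}=\wedge_{i\in I_r}\widetilde f^{(i)}$ are \emph{not} the iterated derivatives $\widetilde F_k^{(r)}$, so $\det\widetilde{\mathcal W}$ is not a Wronskian of the scalar functions $A_j^*\circ\widetilde F_k$.

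The actual argument: normalize $\|A_i^*\|=1$; by $(\mathfrak N-1)$-subgeneral position and compactness of $\mathbb P^{\mathfrak n-1}(\mathbb C)$, the quantity $\sum_{|S|=q-\mathfrak N}\prod_{i\in S}\big(|A_i^*(Z)|/\|Z\|\big)^{\omega(i)}$ is pinched between $1/C_1$ and $C_1$. For each $S$, split off $\prod_{i\in Q}|A_i^*(Z)|^{\omega(i)}/\|Z\|^{\sum\omega(i)}$ and apply Proposition~\ref{Nochka estimate} to the quantities $\|Z\|/|A_i^*(Z)|\ge 1$ over the complement $C(S)$ (which has cardinality $\mathfrak N$ and rank $\mathfrak n$), producing a rank-$\mathfrak n$ subset $C_0(S)$; this is the crucial direction and it is the opposite of what you propose (you apply Nochka's estimate to $|A_i^*\circ\overline F_k|$, which need not be $\ge 1$). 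Using Theorem~\ref{Nochka weights and constant}(ii) for $\sum\omega(i)$, summing over $S$, invoking the lower bound, substituting $Z=\overline F_k(z)$, and multiplying by $1=|\det\mathcal W|/|\det\mathcal W|$ with $A_i^*\circ\widetilde F_k=g\,(A_i^*\circ\overline F_k)$ gives precisely the stated inequality. Your first and last paragraphs gesture at these ingredients, but the claimed Wronskian identity and the collapsed middle inequality are genuine gaps.
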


\begin{proof} The arguments follow closely to that of \cite[page~125, Lem. 4.2.3]{Noguchi-Winkelmann2014}.
	Without loss of generality, we always assume that each hyperplane $A_i$ is defined by a linear form $A_i^*$ having unit norm $\|A_i^*\|=1$.
Since $\{A_i\}_{i\in Q}$ is in $(\mathfrak{N}-1)$-subgeneral position, for any point $[Z]\in \mathbb{P}^{\mathfrak{n-1}}(\mathbb{C})$, where $Z\in\mathbb{C}^{\mathfrak{n}}\setminus\{0\}$, there is some index subset $S\subset Q$ with cardinality $|S|=q-\mathfrak{N}$ such that all the corresponding hyperplanes miss $[Z]$, namely
$
\prod_{i\in S}\frac{A_i^*(Z)}{\|Z\|}\not=0
$. Noting that $A_i^*(Z)/\|Z\|$ is well-defined for $[Z]$, 
by compactness argument, there exists some constant $C_1>0$ depending only on $\{A_i\}_{i\in Q}$ such that
\begin{align}
\label{estimate norm1}
\dfrac{1}{C_1}
<
\sum_{S\subset Q,\,|S|=q-\mathfrak{N}}
\prod_{i\in S}\bigg(\dfrac{|A_i^*(Z)|}{\|Z\|}\bigg)^{\omega(i)}
<
C_1
\qquad\qquad\qquad
\scriptstyle{(\forall\,Z\,\in\,\mathbb{P}^{\mathfrak{n}-1}(\mathbb{C})).}
\end{align}
Denote by $C(S)$ the complement of $S$ in $Q$. Now we can rewrite each term in the middle of the above inequality as
\begin{equation}
\label{rewrite 1}
\prod_{i\in S}\bigg(\dfrac{|A_i^*(Z)|}{\|Z\|}\bigg)^{\omega(i)}
=
\dfrac{\prod_{i\in Q}|A_i^*(Z)|^{\omega(i)}}{\|Z\|^{\sum_{i\in Q}\omega(i)}}
\cdot
\prod_{i\in C(S)}\bigg(\dfrac{\|Z\|}{|A_i^*(Z)|}\bigg)^{\omega(i)}.
\end{equation}
Since $\|A_i^*\|=1$, we have $\frac{\|Z\|}{|A_i^*(Z)|}\geq 1$. Noting that $C(S)$ has cardinality $\mathfrak{N}$,
by the $(\mathfrak{N}-1)$-subgeneral assumption of $\{A_i\}_{i\in Q}$, we see that 
$\rank(C(S))=\mathfrak{n}$.
 Hence by   Proposition~\ref{Nochka estimate}, there exists an index subset $C_0(S)\subset C(S)$ having cardinality $\mathfrak{n}$ such that
\[
\prod_{i\in C(S)}\bigg(\dfrac{\|Z\|}{|A_i^*(Z)|}\bigg)^{\omega(i)}
\leq
\prod_{i\in C_0(S)}
\dfrac{\|Z\|}{|A_i^*(Z)|}.
\]
Remembering that
$\sum_{i\in Q}\omega(i)=\widetilde{\omega}(q-2\,\mathfrak{N}+\mathfrak{n})+\mathfrak{n}$ by Theorem~\ref{Nochka weights and constant},
we can estimate \eqref{rewrite 1} as
\begin{align}
\label{estimate norm 2}
\prod_{i\in S}\bigg(\dfrac{|A_i^*(Z)|}{\|Z\|}\bigg)^{\omega(i)}
&
\leq
\dfrac{\prod_{i\in Q}|A_i^*(Z)|^{\omega(i)}}{\|Z\|^{\widetilde{\omega}\big(q-2\,\mathfrak{N}+\mathfrak{n}\big)+\mathfrak{n}}}
\cdot
\prod_{i\in C_0(S)}
\dfrac{\|Z\|}{|A_i^*(Z)|}\notag\\
&=
\dfrac{\prod_{i\in Q}|A_i^*(Z)|^{\omega(i)}}{\|Z\|^{\widetilde{\omega}\big(q-2\,\mathfrak{N}+\mathfrak{n}\big)}}
\cdot
\dfrac{1}{\prod_{i\in C_0(S)}|A_i^*(Z)|}.\notag
\end{align}
Taking the sum on both sides of the above inequality for all $S$ and using the lower bound of~\eqref{estimate norm1}, we receive
\[
\|Z\|^{\widetilde{\omega}\big(q-2\,\mathfrak{N}+\mathfrak{n}\big)}
\leq
C_1
\cdot
\bigg(\prod_{i\in Q}|A_i^*(Z)|^{\omega(i)}\bigg)
\cdot
\sum_{S\subset Q,\,|S|=q-\mathfrak{N}}
\dfrac{1}{\prod_{i\in C_0(S)}|A_i^*(Z)|}.
\]
Substituting $Z$ by $\overline{F}_k(z)$ in the above inequality and noting that $\|F_k\|_{\max}\leq \|\overline{F}_k\|$, we receive
\begin{align}
\|F_k(z)\|_{\max}^{\widetilde{\omega}\big(q-2\,\mathfrak{N}+\mathfrak{n}\big)}
&\leq
C_1
\cdot
\bigg(\prod_{i\in Q}|A_i^*\circ \overline{F}_k(z)|^{\omega(i)}\bigg)
\cdot
\sum_{S\subset Q,\,|S|=q-\mathfrak{N}}
\dfrac{1}{\prod_{i\in C_0(S)}|A_i^*\circ \overline{F}_k(z)|}\notag\\
&=
C_1
\cdot
\bigg(
\dfrac{\prod_{i\in Q}|A_i^*\circ \overline{F}_k(z)|^{\omega(i)}}{|\det(\mathcal{W}(z))|}
\bigg)\times\sum_{S\subset Q,\,|S|=q-\mathfrak{N}}
\dfrac{|\det(\mathcal{W}(z))|\,
	|g(z)|^{\mathfrak{n}}}{\prod_{i\in C_0(S)}|A_i^*\circ \widetilde{F}_k(z)|}\notag\\
&\leq
C
\cdot
\bigg(
\dfrac{|g(z)|^{\mathfrak{n}}\prod_{i\in Q}|A_i^*\circ \overline{F}_k(z)|^{\omega(i)}}{|\det(\mathcal{W}(z))|}
\bigg)
\times\sum_{R\subset Q,\,|R|=\rank(R)=\mathfrak{n}}
\dfrac{|\det(\mathcal{W}(z))|}{\prod_{i\in R}|A_i^*\circ \widetilde{F}_k(z)|}\notag,
\end{align}
whence concludes the proof.
\end{proof}

\subsection{Fujimoto's vanishing order estimates}
In order to estimate the vanishing order of $\det \mathcal{W}$ effectively, Fujimoto~\cite[Section 5]{Fujimoto1982}
employed the following nice coordinate system.  The existence is essentially guaranteed by Gaussian elimination.

\begin{lem}
	\label{local expression of f}
	Let $f:\mathbb{C}\rightarrow\mathbb{P}^n(\mathbb{C})$ be a linearly nondegenerate entire holomorphic curve. For a given point $z_0\in\mathbb{C}$, there exist some homogeneous coordinates of $\mathbb{P}^n(\mathbb{C})$, a reduced representation of $f$, and a local coordinate $z$ in a small neighborhood $U$ of $z_0$ such that $f$ can be written as
	$f=[f_0:\dots:f_n]$, where 
	\begin{equation}
	\label{nice coordinates}
	f_i=z^{\alpha_i}+\sum_{j>\alpha_i}b_{ij}z^j
	\ \ \ \ \ \
	\scriptstyle{(b_{ij}\,\in\,\mathbb{C};\,0\,\leq\, i\,\leq\,n)}
	\end{equation}
	on $U$ and $\alpha_0=0<\alpha_1<\dots<\alpha_n$.
	\qed
\end{lem}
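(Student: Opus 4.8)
The plan is to derive this from Gaussian elimination applied to the Taylor coefficients of a reduced representation of $f$ at $z_0$. Since the statement is local, I would first translate the coordinate on $\mathbb{C}$ so that $z_0$ becomes the origin; writing $w=z-z_0$, it suffices to obtain an expansion of the asserted shape in the variable $w$, so in fact no nonlinear reparametrization is needed and the ``local coordinate'' in the conclusion is merely this translate of the global one. Fixing any reduced representation $[f_0:\dots:f_n]$ of $f$, I would expand each component as $f_i(z)=\sum_{j\ge 0}a_{ij}\,w^j$ and form the $(n+1)\times\infty$ coefficient matrix $M=(a_{ij})_{0\le i\le n,\ j\ge 0}$.

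The first key point is that $\rank M=n+1$. A $\mathbb{C}$-linear relation among the rows of $M$ is exactly a $\mathbb{C}$-linear relation among the Taylor series of $f_0,\dots,f_n$ at $z_0$; since $\mathbb{C}$ is connected, a holomorphic function all of whose Taylor coefficients at $z_0$ vanish is identically zero, so any such relation would be a genuine linear relation among $f_0,\dots,f_n$, contradicting linear nondegeneracy. The second key point is that, because $[f_0:\dots:f_n]$ is reduced, the components have no common zero, hence not all of them vanish at $z_0$, so the $0$-th column of $M$ is nonzero.

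Next I would run Gaussian elimination: there is a constant matrix $C=(c_{i\ell})\in\mathrm{GL}_{n+1}(\mathbb{C})$ with $CM$ in reduced row echelon form. Since $\rank M=n+1$ there are exactly $n+1$ pivot columns $\alpha_0<\alpha_1<\dots<\alpha_n$, and by the second key point $\alpha_0=0$. The $i$-th row of $CM$ is the Taylor coefficient sequence of $g_i:=\sum_{\ell}c_{i\ell}f_\ell$, and the echelon normalization says precisely that $g_i=w^{\alpha_i}+\sum_{j>\alpha_i}b_{ij}\,w^j$ for suitable $b_{ij}\in\mathbb{C}$ (with even the pivot-index coefficients $b_{i\alpha_s}$ vanishing, which is stronger than needed). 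To finish, I would reinterpret $C$ as a change of homogeneous coordinates on $\mathbb{P}^n(\mathbb{C})$: in the new linear coordinates $y_i=\sum_\ell c_{i\ell}z_\ell$, the curve $f$ is represented by $(g_0,\dots,g_n)$, still a reduced representation because an invertible linear automorphism of $\mathbb{C}^{n+1}$ preserves the ``no common zero'' property. Relabelling $w$ as $z$ and $g_i$ as $f_i$ then yields the claim.

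I do not expect a serious obstacle, as the argument is essentially bookkeeping; the two places deserving care are the step $\rank M=n+1$ (which genuinely uses the identity theorem, not just formal linear algebra) and the observation that left multiplication of the representation vector by $C\in\mathrm{GL}_{n+1}(\mathbb{C})$ is realized geometrically as a change of homogeneous coordinates on $\mathbb{P}^n(\mathbb{C})$ rather than as a rescaling of a fixed representation, so that reducedness persists and the conclusion is a statement about $f$ in the new chart. A coordinate-free variant, if preferred: filter the $(n{+}1)$-dimensional space $V=\operatorname{span}_{\mathbb{C}}\{f_0,\dots,f_n\}$ by $V_m:=\{g\in V:\ \ord_{z_0}g\ge m\}$; each quotient $V_m/V_{m+1}$ is at most one-dimensional (two elements of the same vanishing order differ, after rescaling, by an element of strictly larger order), the $n+1$ jumps occur at $\alpha_0<\dots<\alpha_n$ with $\alpha_0=0$, and choosing at each jump a representative with leading Taylor coefficient $1$ produces the desired reduced representation.
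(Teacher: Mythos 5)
Your proof is correct and is a complete version of exactly the argument the paper gestures at: the paper itself gives no proof for this lemma beyond the one-line remark that ``the existence is essentially guaranteed by Gaussian elimination'' and a \texttt{\textbackslash qed}, and your coefficient-matrix argument (rank $n+1$ from linear nondegeneracy plus the identity theorem, $\alpha_0=0$ from reducedness, row echelon form realized as a linear change of homogeneous coordinates preserving reducedness) fills in precisely that outline. The coordinate-free variant via the flag $V_m=\{g\in V:\ord_{z_0}g\ge m\}$ is a clean equivalent formulation and would serve equally well.
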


Thus, he received the following estimates, assuming a nice coordinate system for~\eqref{nice coordinates}.

\begin{cor}
	 One has
	$
	\mathcal{D}_k(z_0)=\sum_{i=0}^k(\alpha_i-i)
	$.
\end{cor}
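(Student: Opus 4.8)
The plan is to work entirely in the nice local coordinate system of Lemma~\ref{local expression of f}, where near $z_0$ we may write $f_i = z^{\alpha_i} + \sum_{j > \alpha_i} b_{ij} z^j$ with $0 = \alpha_0 < \alpha_1 < \dots < \alpha_n$. Recall that $\mathcal{D}_k$ is by definition $\min_{J \in \mathcal{S}} (W(I_0, J))_0$, i.e.\ the minimal vanishing order at $z_0$ among all order-$(k+1)$ Wronskians $W(\{f_j\}_{j \in J})$ as $J$ ranges over $(k+1)$-element subsets of $\{0, \dots, n\}$. So I must (a) produce the lower bound $(W(I_0, J))_0 \geq \sum_{i=0}^k (\alpha_i - i)$ for every such $J$, and (b) exhibit one particular $J$ achieving equality — the natural candidate being $J_0 = \{0, 1, \dots, k\}$, the indices of the $k+1$ lowest-order components.

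First I would compute the vanishing order of $W(f_{j_0}, \dots, f_{j_k})$ for an arbitrary subset $J = \{j_0 < j_1 < \dots < j_k\}$. Each column of the Wronskian matrix is $(f_{j_\beta}, f_{j_\beta}', \dots, f_{j_\beta}^{(k)})^{\mathrm{T}}$; since $f_{j_\beta} = z^{\alpha_{j_\beta}}(1 + O(z))$, the $\ell$-th derivative vanishes to order exactly $\alpha_{j_\beta} - \ell$ (when $\alpha_{j_\beta} \geq \ell$; to higher order, or it is absent, otherwise — but a clean way to handle this uniformly is to factor the leading monomial behaviour out). The key computation: the leading term of $W(f_{j_0}, \dots, f_{j_k})$ is a nonzero constant multiple of the ``generalized Wronskian of monomials'' $W(z^{\alpha_{j_0}}, \dots, z^{\alpha_{j_k}})$, which equals $c \cdot z^{\sum_{\beta=0}^k (\alpha_{j_\beta} - \beta)}$ with $c = \prod_{\beta} \frac{\alpha_{j_\beta}!}{(\alpha_{j_\beta}-\beta)!}\cdot\det(\text{Vandermonde-type matrix}) \neq 0$ — the nonvanishing of the relevant determinant being exactly the statement that distinct monomials are linearly independent, equivalently a Vandermonde-in-disguise argument on the exponents $\alpha_{j_0}, \dots, \alpha_{j_k}$. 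Hence $(W(I_0,J))_0 = \sum_{\beta=0}^k (\alpha_{j_\beta} - \beta)$ on the nose (no inequality needed once one commits to the nice coordinates), provided one checks the lower-order perturbation terms $b_{ij}z^j$ do not lower the order — which they cannot, since they only add contributions of strictly higher order in each matrix entry.

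Then the minimization is immediate: since $\alpha_0 < \alpha_1 < \dots < \alpha_n$, the sum $\sum_{\beta=0}^k \alpha_{j_\beta}$ over a $(k+1)$-subset $\{j_0 < \dots < j_k\}$ is minimized by taking $j_\beta = \beta$, giving $\sum_{i=0}^k \alpha_i$; the subtracted $\sum_\beta \beta = \binom{k+1}{2}$ is the same for every subset. Therefore $\mathcal{D}_k(z_0) = \min_J (W(I_0,J))_0 = \sum_{i=0}^k \alpha_i - \sum_{i=0}^k i = \sum_{i=0}^k (\alpha_i - i)$, as claimed.

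The main obstacle is the bookkeeping in step two: rigorously justifying that the leading coefficient $c$ of the Wronskian of the true components $f_{j_\beta}$ (not just the bare monomials) is nonzero and that the order is exactly — not merely at least — $\sum_\beta(\alpha_{j_\beta}-\beta)$. The cleanest route is to expand $W(f_{j_0},\dots,f_{j_k})$ multilinearly in the monomial summands of the columns: every resulting term is (constant)$\cdot W(z^{m_0},\dots,z^{m_k})$ for some exponent tuple with $m_\beta \geq \alpha_{j_\beta}$, each such monomial-Wronskian vanishing to order $\sum_\beta m_\beta - \binom{k+1}{2}$ (and vanishing identically if two exponents coincide), so the unique lowest-order surviving term comes from $m_\beta = \alpha_{j_\beta}$ for all $\beta$, with coefficient the nonzero Wronskian of distinct monomials. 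This confirms the exact order and finishes the corollary; I expect the authors to invoke precisely this monomial-Wronskian computation, essentially Fujimoto's, and I would present it that way.
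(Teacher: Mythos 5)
Your proof is correct. Note, however, that the paper itself does not prove this corollary: it is stated as one of Fujimoto's results (``Thus, he received the following estimates'') and deferred to the citation of~\cite{Fujimoto1982}, so there is no in-paper argument to compare against. Your argument is nonetheless the right one, and presumably Fujimoto's: expand $W(f_{j_0},\dots,f_{j_k})$ multilinearly in the monomial summands of $f_{j_\beta}=z^{\alpha_{j_\beta}}+\sum_{m>\alpha_{j_\beta}}b_{j_\beta,m}z^m$, observe that each resulting term is a constant times a monomial Wronskian $W(z^{m_0},\dots,z^{m_k})$, which equals a nonzero constant times $z^{\sum_\beta m_\beta-\sum_\beta\beta}$ exactly when the $m_\beta$ are distinct (the determinant of falling factorials reducing to a Vandermonde), and then note that the unique tuple minimizing $\sum_\beta m_\beta$ is $m_\beta=\alpha_{j_\beta}$, so the vanishing order of $W(I_0,J)$ at $z_0$ is exactly $\sum_\beta(\alpha_{j_\beta}-\beta)$ with no possibility of cancellation. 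The final minimization over $J\in\mathcal{S}$ then uses only that $\alpha_0<\alpha_1<\cdots<\alpha_n$ is strictly increasing, forcing $J=\{0,\dots,k\}$ and giving $\mathcal{D}_k(z_0)=\sum_{i=0}^k(\alpha_i-i)$ as claimed.
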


\begin{defi}
	The {\sl weight} 
	$\mathsf{w}(I)$ of a set $I=\{i_0,\dots,i_k\}$ where $0\leq i_0<i_1<\dots<i_k<\infty$ is defined to be
	\[
	\mathsf{w}(I):=(i_0-0)+\dots+(i_k-k).
	\]
	\begin{rem}
		\label{estimate of the weight of I}
		If moreover $I
		\subset \{0,1,\dots, n\}$, then
		one has
		\[
		\mathsf{w}(I)
		\leq
		\mathsf{w}(\{n-k,n-k+1,\dots,n\})
		=
		(n-k)(k+1).
		\]
	\end{rem}
\end{defi}

\begin{cor}
	\label{zero order of W(I,J) estimate}
	For every $I,J\in S$, one has
	\[
	(W(I,J))_0
	\geq
	\sum_{a\in\mathbb{C}}
	\big(\mathcal{D}_k(a)-\mathsf{w}(I)+\mathsf{w}(J)
	\big)^+\{a\}.
	\]
\end{cor}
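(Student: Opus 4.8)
The plan is to prove the inequality coefficientwise: fixing an arbitrary point $z_0\in\mathbb{C}$, it suffices to show
\[
\ord_{z_0}W(I,J)\ \geq\ \bigl(\mathcal{D}_k(z_0)-\mathsf{w}(I)+\mathsf{w}(J)\bigr)^{+}.
\]
If $W(I,J)\equiv 0$ there is nothing to prove, so assume $W(I,J)\not\equiv 0$; since a holomorphic function has nonnegative vanishing order, we may moreover assume $\mathcal{D}_k(z_0)-\mathsf{w}(I)+\mathsf{w}(J)>0$ and then reduce to proving $\ord_{z_0}W(I,J)\geq\mathcal{D}_k(z_0)-\mathsf{w}(I)+\mathsf{w}(J)$. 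First I would invoke Lemma~\ref{local expression of f} to pick a reduced representation and a local coordinate $z$ centered at $z_0$ in which $f_i=z^{\alpha_i}+\sum_{j>\alpha_i}b_{ij}z^{j}$ with $0=\alpha_0<\alpha_1<\cdots<\alpha_n$, and carry out the computation of $\ord_{z_0}W(I,J)$ in this chart; here the preceding corollary reads $\mathcal{D}_k(z_0)=\sum_{i=0}^{k}(\alpha_i-i)$.

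Write $I=\{i_0<\cdots<i_k\}$ and $J=\{j_0<\cdots<j_k\}$. The single analytic input is the entrywise order estimate
\[
\ord_{z_0}f_{j_b}^{(i_a)}\ \geq\ (\alpha_{j_b}-i_a)^{+}\qquad(0\leq a,b\leq k),
\]
which is immediate from the normal form $f_{j_b}=z^{\alpha_{j_b}}+(\text{higher order})$. Expanding the determinant $W(I,J)=\sum_{\sigma}\operatorname{sgn}(\sigma)\,\prod_{a=0}^{k}f_{j_{\sigma(a)}}^{(i_a)}$ over permutations $\sigma$ of $\{0,\dots,k\}$, and using that the order of a sum of holomorphic functions is at least the minimum of their orders, I obtain
\[
\ord_{z_0}W(I,J)\ \geq\ \min_{\sigma}\,\sum_{a=0}^{k}(\alpha_{j_{\sigma(a)}}-i_a)^{+}\ \geq\ \min_{\sigma}\,\sum_{a=0}^{k}(\alpha_{j_{\sigma(a)}}-i_a)\ =\ \sum_{b=0}^{k}\alpha_{j_b}-\sum_{a=0}^{k}i_a .
\]
The one genuinely useful observation here is that discarding the positive parts makes $\sum_{a}\alpha_{j_{\sigma(a)}}=\sum_{b}\alpha_{j_b}$ independent of $\sigma$; identifying the true leading coefficient of $W(I,J)$, which may vanish to strictly higher order through cancellation, would be both harder and unnecessary.

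It remains to verify the purely combinatorial inequality
\[
\sum_{b=0}^{k}\alpha_{j_b}-\sum_{a=0}^{k}i_a\ \geq\ \mathcal{D}_k(z_0)-\mathsf{w}(I)+\mathsf{w}(J).
\]
Substituting $\mathcal{D}_k(z_0)=\sum_{i=0}^{k}(\alpha_i-i)$, $\mathsf{w}(I)=\sum_{a=0}^{k}(i_a-a)$ and $\mathsf{w}(J)=\sum_{b=0}^{k}(j_b-b)$, the $i_a$-terms and the triangular numbers cancel and the inequality collapses to $\sum_{b=0}^{k}(\alpha_{j_b}-j_b)\geq\sum_{i=0}^{k}(\alpha_i-i)$. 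Finally I would note that $m\mapsto\alpha_m-m$ is non-decreasing, since $\alpha_{m+1}\geq\alpha_m+1$, while $j_b\geq b$ for every $b$; hence $\alpha_{j_b}-j_b\geq\alpha_b-b$ termwise, and summing over $b$ finishes the proof. I expect no serious analytic obstacle: the substance is the one-line entrywise estimate plus the monotonicity of $m\mapsto\alpha_m-m$, and the remaining effort is bookkeeping — isolating the two trivial cases and executing the final cancellation cleanly.
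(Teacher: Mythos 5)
Your proof is correct and complete, and it is precisely the argument the paper delegates to Fujimoto's nice-coordinate lemma without spelling it out: pass to the normalization of Lemma~\ref{local expression of f} at $z_0$, use the entrywise bound $\ord_{z_0}f_{j_b}^{(i_a)}\geq(\alpha_{j_b}-i_a)^+$, observe that after dropping the positive parts the sum $\sum_a\alpha_{j_{\sigma(a)}}$ is permutation-independent, and close with the monotonicity of $m\mapsto\alpha_m-m$ together with $j_b\geq b$. The cancellation of the $i_a$-terms and triangular numbers in the final reduction checks out exactly as you describe.
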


Running $I, J$ through $\mathcal{S}$, the summation of $-\mathsf{w}(I)$ and $\mathsf{w}(J)$ just cancel each other. Hence we obtain the following

\begin{pro}
	One has
	\begin{align}
		\label{rough estimate of divisors}
	(\det(\mathcal{W}))_0
	\geq
	\mathfrak{n}\,\mathcal{D}_k.
	\end{align}
\end{pro}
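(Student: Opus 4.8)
The plan is to expand $\det\mathcal{W}$ by the Leibniz formula over permutations and to estimate each resulting monomial separately via Corollary~\ref{zero order of W(I,J) estimate}. Writing $\sigma$ for a permutation of the index set $\{0,1,\dots,\mathfrak{n}-1\}$, we have
\[
\det\mathcal{W}
=
\sum_{\sigma}
\sign(\sigma)
\prod_{r=0}^{\mathfrak{n}-1}
W(I_r,I_{\sigma(r)}).
\]
By the Sylvester--Franke identity~\eqref{Sylvester-Franke theorem application} together with the linear nondegeneracy of $f$, one has $\det\mathcal{W}\not\equiv 0$, so its vanishing divisor is well defined and it suffices to work locally.

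Fix a point $a\in\mathbb{C}$. For each permutation $\sigma$, the order of vanishing at $a$ of the corresponding product is the sum of the orders of its factors, and applying Corollary~\ref{zero order of W(I,J) estimate} to each factor $W(I_r,I_{\sigma(r)})$ yields
\[
\ord_a
\prod_{r=0}^{\mathfrak{n}-1}
W(I_r,I_{\sigma(r)})
\;\geq\;
\sum_{r=0}^{\mathfrak{n}-1}
\big(
\mathcal{D}_k(a)-\mathsf{w}(I_r)+\mathsf{w}(I_{\sigma(r)})
\big)^+.
\]
The key observation is that the positive parts may be discarded at no cost, because after summing over $r$ the weight terms cancel: since $\sigma$ permutes $\{I_0,\dots,I_{\mathfrak{n}-1}\}$, we have $\sum_r \mathsf{w}(I_{\sigma(r)})=\sum_r \mathsf{w}(I_r)$, hence
\[
\sum_{r=0}^{\mathfrak{n}-1}
\big(
\mathcal{D}_k(a)-\mathsf{w}(I_r)+\mathsf{w}(I_{\sigma(r)})
\big)^+
\;\geq\;
\sum_{r=0}^{\mathfrak{n}-1}
\big(
\mathcal{D}_k(a)-\mathsf{w}(I_r)+\mathsf{w}(I_{\sigma(r)})
\big)
\;=\;
\mathfrak{n}\,\mathcal{D}_k(a).
\]
Thus every monomial in the Leibniz expansion vanishes at $a$ to order at least $\mathfrak{n}\,\mathcal{D}_k(a)$. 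Since the order of vanishing of a sum of holomorphic functions is at least the minimum of the orders of the summands, $\ord_a\det\mathcal{W}\geq\mathfrak{n}\,\mathcal{D}_k(a)$.

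Letting $a$ range over $\mathbb{C}$ and collecting these pointwise inequalities gives $(\det\mathcal{W})_0\geq\mathfrak{n}\,\mathcal{D}_k$ as divisors, which is the claim. I do not expect a serious obstacle here: the substantive input is Corollary~\ref{zero order of W(I,J) estimate}, and the only point requiring care is that the cancellation of the weight terms is precisely what makes the passage from $(\cdot)^+$ to $(\cdot)$ lossless — for a single $W(I,J)$ in isolation the truncation $(\cdot)^+$ would genuinely matter, but here it is absorbed after summing over a full permutation.
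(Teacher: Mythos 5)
Your proof is correct and follows essentially the same route as the paper: expand $\det\mathcal{W}$ as a signed sum of products, bound each factor by Corollary~\ref{zero order of W(I,J) estimate}, and observe that the weight terms $-\mathsf{w}(I_r)+\mathsf{w}(I_{\sigma(r)})$ cancel after summing over $r$ (which is precisely why dropping the positive part is lossless here). The only cosmetic difference is that the paper phrases the expansion as a ``Laplace expansion'' while you use the Leibniz formula over permutations; the paper also appends a remark that, since both sides of the inequality are coordinate-independent, the pointwise estimate obtained in nice coordinates at each $a$ is indeed a global one, a point your write-up leaves implicit (the same imprecision already sits in the statement of Corollary~\ref{zero order of W(I,J) estimate} itself).
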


Since both sides of the above inequality is independent of  coordinates, it is in fact a general  
estimate.

\subsection{Fujimoto's trick}

Here is an essential ingredient in the proof of the key estimate~\eqref{estimate of logarithmic wronskian} below.

\begin{pro}[Fujimoto] \cite[Lem. 4.2]{Fujimoto1982}
	\label{expression of wronskian}
	Let $h_0,\dots,h_k$ be linearly independent meromorphic functions.
	Let $0\leq i_0<i_1<\dots<i_k$ be integers. Then the meromorphic function
	\[
	\frac{
	\det 
	(h^{(i_j)}_{\ell})_{j,\,\ell\,=\,0,\dots,\,k}
}{
	\det 
	(h^{(i)}_{\ell})_{i,\,\ell\,=\,0,\dots,\,k}
}
	\]
	can be written as a polynomial whose variables are of the form
	\[ 
	\label{variable of weight l}
	\bigg(
	\frac{
		\big(
		\det 
		(h_{\ell_i}^{(j)})_{i,\,j\,=\,0,\dots,\,r}
		\big)'
	}{\det\,
		(h_{\ell_i}^{(j)})_{i,\,j\,=\,0,\dots,\,r}
	}
	\bigg)^{(\lambda -1)}
	\eqno\scriptstyle{(0\,
		\leq\,r\,\leq\, k;\,\lambda\,\geq\, 1;\,0\,\leq\, \ell_0\,<\ell_1\,<\,\cdots\,<\,\ell_r\,\leq\, k).}
	\]
	Furthermore, if one associates weight $\lambda$ with the above variable, then this polynomial can be chosen to be isobaric of weight $\mathsf{w}(I)$ where $I=\{i_0,\dots, i_k\}$.
\end{pro}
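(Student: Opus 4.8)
The plan is to induct on $k$, following the logarithmic-derivative calculus that underlies Fujimoto's vanishing-order estimates. Write $D(i_0,\dots,i_k) := \det\bigl(h_\ell^{(i_j)}\bigr)_{j,\ell=0,\dots,k}$ for the "generalized Wronskian" attached to a multi-index $0\le i_0<\cdots<i_k$, so that the denominator in the statement is $D(0,1,\dots,k)$, the ordinary Wronskian $W(h_0,\dots,h_k)$. The quantity to be analyzed is the ratio $R_{i_0,\dots,i_k} := D(i_0,\dots,i_k)/W(h_0,\dots,h_k)$. First I would record the base case $k=0$: there $D(i_0)=h_0^{(i_0)}$ and $W(h_0)=h_0$, so $R_{i_0} = h_0^{(i_0)}/h_0$, which is a universal polynomial of weight $i_0 = \mathsf{w}(\{i_0\})$ in the single logarithmic-derivative variable $(h_0'/h_0), (h_0'/h_0)', \dots$ — precisely the allowed variables with $r=0$, $\ell_0=0$.

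For the inductive step the key manipulation is to expand $D(i_0,\dots,i_k)$ along its last row and to factor out the order-$k$ minors built from $h_0,\dots,h_{k-1}$. Concretely, I would first reduce to the case $i_0=0$: if $i_0\ge 1$, differentiate each $h_\ell$ "once" in the Wronskian-building process, i.e.\ use the identity expressing $D(i_0,\dots,i_k)$ in terms of $W(h_0',\dots)$-type Wronskians — but the cleaner route is Fujimoto's: divide the $j$-th row of the defining matrix successively. The standard trick is that
\[
\frac{D(i_0,i_1,\dots,i_k)}{D(0,i_1,\dots,i_k)}
\]
can be handled by the $k=0$ analysis applied to the functions $\widetilde h_\ell := $ (appropriate $k$-fold Wronskians), while
\[
\frac{D(0,i_1,\dots,i_k)}{D(0,1,\dots,k)}
\]
is, after dividing numerator and denominator by $W(h_0)=h_0$ in a suitable sense, the ratio of a generalized Wronskian to an ordinary Wronskian for the $k-1$ logarithmic-derivative functions $(h_\ell/h_0)'$, $\ell=1,\dots,k$ — here one uses the classical reduction $W(h_0,\dots,h_k) = h_0^{k+1}\cdot W\bigl((h_1/h_0)',\dots,(h_k/h_0)'\bigr)$ and its generalized-Wronskian analogue. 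Applying the induction hypothesis to this $(k-1)$-system expresses $D(0,i_1,\dots,i_k)/D(0,1,\dots,k)$ as an isobaric polynomial of weight $\mathsf{w}(\{0,i_1,\dots,i_k\}) = (i_1-1)+\cdots+(i_k-k)$ in variables of the form
\[
\Bigl(\tfrac{(\det(g_{\ell_i}^{(j)}))'}{\det(g_{\ell_i}^{(j)})}\Bigr)^{(\lambda-1)}, \qquad g_\ell = (h_\ell/h_0)',
\]
and then one re-expresses each such $g$-Wronskian as $h_0^{-?}$ times an $h$-Wronskian, checking that the logarithmic derivative $(\cdot)'/(\cdot)$ kills the $h_0$-power and converts everything back into the advertised $h$-variables without changing the weight. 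Multiplying the two factors, the weights add to $\mathsf{w}(\{i_0,\dots,i_k\})$, because shifting $0\mapsto i_0$ contributes exactly $i_0 = \mathsf{w}(\{i_0,\dots,i_k\}) - \mathsf{w}(\{0,i_1,\dots,i_k\})$ and the first factor's $k=0$-type polynomial carries weight $i_0$.

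The main obstacle will be the bookkeeping in the change-of-functions steps: verifying that when one passes from $h_0,\dots,h_k$ to $(h_1/h_0)',\dots,(h_k/h_0)'$ (or to the auxiliary Wronskian systems), every generalized Wronskian $\det(g_{\ell_i}^{(j)})$ appearing as a variable in the inductive output, once its logarithmic derivative is taken and it is rewritten in terms of the original $h_\ell$, indeed lands among the variables $\bigl((\det(h_{\ell_i}^{(j)})_{i,j=0,\dots,r})'/\det(h_{\ell_i}^{(j)})_{i,j=0,\dots,r}\bigr)^{(\lambda-1)}$ with indices $0\le \ell_0<\cdots<\ell_r\le k$ — i.e.\ that no spurious non-logarithmic terms survive and that the index range stays within $\{0,\dots,k\}$. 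This is the content of Fujimoto's computation and amounts to the identity $\bigl(h_0^{-m}\cdot\Phi\bigr)'/\bigl(h_0^{-m}\cdot\Phi\bigr) = \Phi'/\Phi - m\,h_0'/h_0$ together with a consistency check that the power $m$ of $h_0$ is itself expressible through lower-order logarithmic-Wronskian variables; granting these identities, the isobaric-weight claim is an immediate consequence of additivity of weights under the product. I would present the $k=0$ and $k=1$ cases explicitly and then indicate the general induction, citing \cite[Lem. 4.2]{Fujimoto1982} for the detailed verification of the closure property.
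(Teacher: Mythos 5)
The paper does not actually prove this proposition; it is quoted verbatim as \cite[Lem.~4.2]{Fujimoto1982} and used as a black box. So there is no in-paper proof to compare against, and your proposal must be judged on its own merits.

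Your strategy -- induction on $k$ combined with the classical logarithmic reduction $W(h_0,\dots,h_k)=h_0^{k+1}W\bigl((h_1/h_0)',\dots,(h_k/h_0)'\bigr)$ -- is the right framework, and the $k=0$ base case is fine. However, the pivotal step in the induction is not correct as stated. You invoke a ``generalized-Wronskian analogue'' of the classical reduction in order to identify
\[
\frac{D(0,i_1,\dots,i_k)}{D(0,1,\dots,k)}
\]
with the corresponding ratio for the $k$ functions $g_\ell:=(h_\ell/h_0)'$. But generalized Wronskians, unlike the ordinary Wronskian, are \emph{not} homogeneous under multiplication by a common factor $h_0$: already for $k=1$ one computes
\[
D(0,2;h_0,h_1)=h_0^2\Bigl(g''+2\,\tfrac{h_0'}{h_0}\,g'\Bigr),\qquad
D(0,1;h_0,h_1)=h_0^2\,g',
\]
so $D(0,2)/D(0,1)=g''/g'+2\,h_0'/h_0$, not the bare ratio $g''/g'$ your reduction would produce. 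The extra term $2\,h_0'/h_0$ happens to be an allowed variable and the weights still balance, but this is not a consequence of your proposed decomposition -- it is precisely the nontrivial content of Fujimoto's lemma. A similar issue affects your treatment of the first factor $D(i_0,\dots,i_k)/D(0,i_1,\dots,i_k)$: shifting $i_0\mapsto 0$ is not equivalent to a ``$k=0$ analysis applied to auxiliary functions,'' because replacing $h_\ell$ by $h_\ell'$ changes the denominator Wronskian as well.

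The net effect is that your induction reduces the claim to two intermediate identities which are themselves of the same order of difficulty as the original statement, and one of which (the generalized-Wronskian homogeneity) is false in the naive form. You flag some of this as ``bookkeeping'' and defer to Fujimoto, but the obstruction is structural, not merely notational. To repair the argument you would need to keep track, at each inductive stage, of the correction terms involving $h_0'/h_0$ and higher logarithmic derivatives, verify that they always land among the permitted variables with the correct weight, and show that the total weight telescopes to $\mathsf{w}(I)$. That is essentially what Fujimoto's proof does directly. As written, the proposal is a plausible sketch of strategy but does not constitute a proof.
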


\begin{cor}
	\label{typo cor}
	For any point $a\in \mathbb{C}$, one has
	\begin{equation}
	\label{new interpretation}
	\ord_a\,
		\det 
	(h^{(i_j)}_{\ell})_{j,\,\ell\,=\,0,\dots,\,k}
	\geq
	\ord_a\,
		\det 
	(h^{(i)}_{\ell})_{i,\,\ell\,=\,0,\dots,\,k}
	-
	\mathsf{w}(I).
	\end{equation}
\end{cor}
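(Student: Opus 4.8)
The plan is to read the inequality off from Proposition~\ref{expression of wronskian} with essentially no extra work. Write $N_I:=\det(h^{(i_j)}_{\ell})_{j,\,\ell\,=\,0,\dots,k}$ for the ``numerator'' determinant and $D:=\det(h^{(i)}_{\ell})_{i,\,\ell\,=\,0,\dots,k}$ for the standard Wronskian of $h_0,\dots,h_k$. Since $h_0,\dots,h_k$ are linearly independent, $D\not\equiv 0$, so the quotient $P:=N_I/D$ is a well-defined meromorphic function, and by the Proposition it is an isobaric polynomial of weight $\mathsf{w}(I)$ in variables of the stated form. It therefore suffices to prove the pointwise bound $\ord_a P\geq-\mathsf{w}(I)$, because then $\ord_a N_I=\ord_a P+\ord_a D\geq\ord_a D-\mathsf{w}(I)$, which is exactly \eqref{new interpretation}.

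First I would estimate $\ord_a$ of a single variable. Each variable has the shape $\bigl(W'/W\bigr)^{(\lambda-1)}$, where $W=\det(h_{\ell_i}^{(j)})_{i,\,j\,=\,0,\dots,r}$ is the Wronskian of the subfamily $h_{\ell_0},\dots,h_{\ell_r}$, hence $W\not\equiv 0$ again by linear independence. For any meromorphic $W\not\equiv0$ one has $\ord_a(W'/W)\geq-1$: writing $W=(z-a)^{m}u$ with $m=\ord_a W$ and $u(a)\neq0$ gives $W'/W=m/(z-a)+u'/u$ with $u'/u$ holomorphic at $a$. Since differentiation lowers the order at a point by at most one (as $g=(z-a)^{m}v$ with $v(a)\neq0$ yields $g'=(z-a)^{m-1}(mv+(z-a)v')$), iterating gives $\ord_a\bigl((W'/W)^{(\lambda-1)}\bigr)\geq-1-(\lambda-1)=-\lambda$. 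Thus a variable to which the Proposition assigns weight $\lambda$ has order at least $-\lambda$ at $a$.

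Next I would combine this over a monomial of $P$. Because $P$ is isobaric of weight $\mathsf{w}(I)$, every monomial occurring in it is, up to a constant, a product of such variables whose assigned weights sum to exactly $\mathsf{w}(I)$; by the previous step and additivity of $\ord_a$ over products, each such monomial has order at least $-\mathsf{w}(I)$ at $a$. Since $\ord_a$ of a finite sum is at least the minimum of the orders of its summands, we conclude $\ord_a P\geq-\mathsf{w}(I)$, which finishes the proof.

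The argument is pure bookkeeping once Proposition~\ref{expression of wronskian} is available; the only points needing a little care are the elementary fact that the logarithmic derivative of a nonzero meromorphic function has a pole of order at most one, and the observation that none of the auxiliary Wronskians $W$ degenerate — both of which follow at once from the linear independence of $h_0,\dots,h_k$. I do not expect any genuine obstacle beyond keeping track of the weights correctly.
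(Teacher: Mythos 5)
Your argument is correct and supplies exactly the bookkeeping that the paper leaves implicit when it labels this statement a corollary of Proposition~\ref{expression of wronskian}: each variable $\bigl(W'/W\bigr)^{(\lambda-1)}$ has order $\geq -\lambda$ at $a$ (simple pole of the logarithmic derivative, then one order lost per differentiation), so every isobaric monomial of weight $\mathsf{w}(I)$ in the polynomial $P=N_I/D$, and hence $P$ itself, has order $\geq -\mathsf{w}(I)$, giving $\ord_a N_I\geq\ord_a D-\mathsf{w}(I)$. The paper then sketches an alternative derivation (valid when the $h_i$ are holomorphic) using the normalized local coordinates of Lemma~\ref{local expression of f} together with Corollary~\ref{zero order of W(I,J) estimate}, but your route through Fujimoto's isobaric-polynomial expansion is the one the corollary is meant to follow from, and it works for meromorphic $h_i$ as well.
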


When $h_1,\dots, h_k$ are holomorphic, we can provide an alternative interpretation of~\eqref{new interpretation} as follows.  Considering the holomorphic map
\[
\mathsf{h}=[h_0:\cdots:h_k]
:
\mathbb{C}
\longrightarrow
\mathbb{P}^k(\mathbb{C}),
\]
by Lemma~\ref{local expression of f}, in a small neighborhood around $a\in\mathbb{C}$,
we can change the homogeneous coordinates of 
$\mathbb{P}^k(\mathbb{C})$  by some invertible matrix $P$ such that the representation
$
(f_0,\dots, f_k)=
P(h_0,\dots,h_0)
$
 of $\mathsf{h}$ in these coordinates satisfies
the increasing vanishing shape of~\eqref{nice coordinates}.
Thus by taking $n=k, J=I_0$
in Corollary~\ref{zero order of W(I,J) estimate}
 we receive
 \[
 \ord_a\,
 \det 
 (f^{(i_j)}_{\ell})_{j,\,\ell\,=\,0,\dots,\,k}
 \geq
 \ord_a\,
 \det 
 (f^{(i)}_{\ell})_{i,\,\ell\,=\,0,\dots,k\,}
 -
 \mathsf{w}(I),
 \]
 which in exactly~\eqref{new interpretation}.\qed

\subsection{A vanishing order estimate}

\begin{pro}
	\label{estimation of zero divisor of w}
	The following inequality  holds:
	\begin{align}
	\label{divisor inequality statement}
	\sum_{i\in Q}\omega(i)(A_i^*\circ \overline{F}_k)_0-(\det(\mathcal{W}))_0
	+\mathfrak{n}\,\mathcal{D}_k
	\leq
	\sum_{i\in Q}\omega(i)
\sum_{a\in\mathbb{C}}\min\{\ord_a A_i^*\circ \overline{F}_k,(k+1)(n-k)\}\{a\}
.
	\end{align}
\end{pro}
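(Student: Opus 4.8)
The plan is to prove the asserted divisor inequality coefficient by coefficient, i.e. to show for each fixed $a\in\mathbb{C}$ that
\[
\sum_{i\in Q}\omega(i)\,\big(\ord_a(A_i^*\circ\overline{F}_k)-(k+1)(n-k)\big)^{+}\ \leq\ \ord_a\det(\mathcal{W})-\mathfrak{n}\,\mathcal{D}_k(a);
\]
this is exactly what is left after subtracting $\sum_{i\in Q}\omega(i)\min\{\ord_a(A_i^*\circ\overline{F}_k),(k+1)(n-k)\}$ from both sides, and by \eqref{rough estimate of divisors} the right-hand side is $\geq 0$, so only points lying on some $A_i\cap F_k(\mathbb{C})$ matter. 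Fixing such an $a$, I would pass to Fujimoto's nice coordinates of Lemma~\ref{local expression of f}, where $\mathcal{D}_k(a)=\sum_{j=0}^{k}(\alpha_j-j)$ and $\ord_aW(f_0,\dots,f_n)=\sum_{j=0}^{n}(\alpha_j-j)$, so that \eqref{Sylvester-Franke theorem application} gives $\ord_a\det(\mathcal{W})=\binom{n}{k}\sum_{j=0}^{n}(\alpha_j-j)$. Invoking the classical identity that the Wronskian of the Pl\"ucker coordinates \eqref{Plucker coordinates of F_k} of $\widetilde{F}_k$ is a nonzero constant multiple of $W(f_0,\dots,f_n)^{\binom{n}{k}}$ — which in particular makes $F_k$ linearly nondegenerate — together with the fact that dividing all coordinates by $g$ (with $(g)_0=\mathcal{D}_k$) divides the Wronskian by $g^{\mathfrak{n}}$, the right-hand side above equals $\ord_aW(\text{Pl\"ucker coordinates of }\overline{F}_k)$. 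Thus the problem is reduced to a Cartan--Nochka-type vanishing estimate for $F_k$ against $A_1,\dots,A_q$, but with the truncation level sharpened from the naive $\mathfrak{n}-1$ to $(k+1)(n-k)$.

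I would then apply the Nochka machinery at $a$. With $R=\{i\in Q:\ord_a(A_i^*\circ\overline{F}_k)>0\}$ one has $|R|\leq\mathfrak{N}$, so Proposition~\ref{Nochka estimate}, used with the constants $\exp\!\big((\ord_a(A_i^*\circ\overline{F}_k)-(k+1)(n-k))^{+}\big)\geq 1$, yields indices $i_1,\dots,i_\rho\in R$ with $\rho=\rank(R)\leq\mathfrak{n}$, $\rank\{i_1,\dots,i_\rho\}=\rho$, and
\[
\sum_{i\in Q}\omega(i)\big(\ord_a(A_i^*\circ\overline{F}_k)-(k+1)(n-k)\big)^{+}\ \leq\ \sum_{l=1}^{\rho}\big(\ord_a(A_{i_l}^*\circ\overline{F}_k)-(k+1)(n-k)\big)^{+}.
\]
Writing $A_{i_l}^*=a_{i_l,0}\wedge\cdots\wedge a_{i_l,k}$ and $h_{i_l,\ell}=a_{i_l,\ell}\circ\widetilde{f}$, the Cauchy--Binet formula \eqref{use Cauchy-Binet formula} identifies $A_{i_l}^*\circ\widetilde{F}_k$ with $W(h_{i_l,0},\dots,h_{i_l,k})$, so Corollary~\ref{typo cor} together with the bound $\mathsf{w}(I)\leq(k+1)(n-k)$ of Remark~\ref{estimate of the weight of I} shows that every generalized Wronskian $\det\big(h_{i_l,\ell}^{(\sigma_j)}\big)_{j,\ell=0,\dots,k}$ with $0\leq\sigma_0<\cdots<\sigma_k\leq n$ vanishes at $a$ to order at least $\big(\ord_a(A_{i_l}^*\circ\overline{F}_k)-(k+1)(n-k)\big)^{+}$. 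On the other hand, reordering so that $\ord_a(A_{i_l}^*\circ\overline{F}_k)$ is non-increasing in $l$ and reducing the coordinate functions of $\overline{F}_k$ to a basis $\psi_1,\dots,\psi_{\mathfrak{n}}$ with strictly increasing vanishing orders $\mu_1<\cdots<\mu_{\mathfrak{n}}$, the $l$-dimensional span of $A_{i_1}^*\circ\overline{F}_k,\dots,A_{i_l}^*\circ\overline{F}_k$ lies inside the space of coordinate functions vanishing to order $\geq\ord_a(A_{i_l}^*\circ\overline{F}_k)$, whence $\mu_{\mathfrak{n}-l+1}\geq\ord_a(A_{i_l}^*\circ\overline{F}_k)$ for $1\leq l\leq\rho$. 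Inserting this into $\ord_aW(\text{Pl\"ucker coordinates of }\overline{F}_k)=\sum_{m=1}^{\mathfrak{n}}(\mu_m-(m-1))$ and dropping the nonnegative terms with $m\leq\mathfrak{n}-\rho$ already proves the inequality with the shift $\mathfrak{n}-1$ in place of $(k+1)(n-k)$; upgrading the shift to $(k+1)(n-k)$ is exactly where Fujimoto's vanishing order estimates enter, since Corollary~\ref{zero order of W(I,J) estimate} and the nice-coordinate computation of the orders of the Pl\"ucker coordinates of $\overline{F}_k$ force a lower bound on each $\mu_m$ in terms of the weights $\mathsf{w}$, so that the portion of $\ord_a(A_{i_l}^*\circ\overline{F}_k)$ beyond $(k+1)(n-k)$ is already accounted for — with the Sylvester--Franke multiplicity $\binom{n}{k}$ — inside $W(f_0,\dots,f_n)$, while the Nochka-rank condition on $\{A_{i_l}^*\}$ prevents double counting.

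The technical heart, and the step I expect to be hardest, is this last coupling of the rank condition with the sharper truncation. A crude term-by-term estimate — capping each $\big(\ord_a(A_{i_l}^*\circ\overline{F}_k)-(k+1)(n-k)\big)^{+}$ at its maximal value and summing $\rho$ copies — already overshoots $\ord_a\det(\mathcal{W})-\mathfrak{n}\,\mathcal{D}_k(a)$ as soon as $f$ is ramified at $a$, so one must genuinely use that $A_{i_1}^*,\dots,A_{i_\rho}^*$ are linearly independent \emph{within} the Schubert-type subspace of $\Lambda^{k+1}(\mathbb{C}^{n+1})^{\vee}$ cut out by the osculating flag of $f$ at $a$. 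Turning this mutual exclusion into a precise numerical inequality, compatible simultaneously with Nochka's weights and with the weight bound of Remark~\ref{estimate of the weight of I}, is the crux of the argument.
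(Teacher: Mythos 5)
Your reduction to a pointwise inequality, and your use of Proposition~\ref{Nochka estimate} with $\alpha_i=\exp\big((\ord_a(A_i^*\circ\overline{F}_k)-(k+1)(n-k))^{+}\big)$ to pass to a full-rank subset $\{i_1,\dots,i_\rho\}$, are sound and parallel the filtration/Abel-summation step leading to~\eqref{remove nochka estimate, change order}. The argument breaks after that, for two reasons.

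The ``classical identity'' you invoke --- that the Wronskian of the Pl\"ucker coordinates of $\widetilde{F}_k$ is a nonzero constant multiple of $W(f_0,\dots,f_n)^{\binom{n}{k}}$, hence that $F_k$ is linearly nondegenerate --- is false. The Sylvester--Franke identity~\eqref{Sylvester-Franke theorem application} concerns the \emph{compound} matrix $\mathcal{W}=(W(I_r,I_s))_{r,s}$; since $W(I_r,J)$ is not the $r$-th derivative of $W(I_0,J)$, $\mathcal{W}$ is not the Wronskian matrix of the Pl\"ucker coordinates, and the two determinants genuinely differ. For $f=[1:z:z^2:z^3]$ and $k=1$, one has $W(f_0,f_3)=3z^2=3\,W(f_1,f_2)$, so the six Pl\"ucker coordinates satisfy a nontrivial linear relation and their Wronskian vanishes identically, while $\det\mathcal{W}=W(1,z,z^2,z^3)^{3}$ is a nonzero constant; $F_1$ lies in the non-decomposable hyperplane $p_{03}-3p_{12}=0$. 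Thus Lemma~\ref{local expression of f} cannot be applied to $\overline{F}_k$: there may be no basis $\psi_1,\dots,\psi_{\mathfrak{n}}$ with distinct finite vanishing orders, and the identity $\ord_a\det\mathcal{W}-\mathfrak{n}\mathcal{D}_k(a)=\sum_m(\mu_m-(m-1))$ on which your estimate rests has no meaning. This is exactly why the paper argues with $\det\mathcal{W}$ directly and never with a Wronskian of $\overline{F}_k$.

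Second --- and you concede this --- your scheme produces at best the shift $\mathfrak{n}-1$, and the upgrade to $(k+1)(n-k)$ is left as ``the crux''. That upgrade cannot be extracted from viewing $\overline{F}_k$ as an abstract curve in $\mathbb{P}^{\mathfrak{n}-1}$, since $(k+1)(n-k)\leq\mathfrak{n}-1$ is strict in general and the gain comes from the compound-matrix structure. The paper instead multiplies $\mathcal{W}$ on the right by an invertible $\mathsf{I}$ that replaces $|T_t|$ columns by the decomposable forms $A_1^*,\dots,A_{|T_t|}^*$ expanded in the basis $\{e^*_{I}\}$. Cauchy--Binet collapses each new $(\|J\|,i_s+1)$-entry into a single generalized Wronskian of the sections $h_\ell=a_\ell\circ\widetilde{f}$, to which Corollary~\ref{typo cor} with Remark~\ref{estimate of the weight of I} gives vanishing order at least $\big(\mathcal{D}_k(a)-\mathsf{w}(J)+\mathsf{w}(I_{i_s})\big)+\big(m_s^*-(k+1)(n-k)\big)$; the first bracket matches the a priori estimate on the original entry of $\mathcal{W}$, the second provides the surplus, and Laplace-expanding $\det\widetilde{\mathcal{W}}=\det\mathsf{I}\cdot\det\mathcal{W}$ yields~\eqref{desired estimate}. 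This column-replacement step, which lets the decomposability of the $A_i^*$ interact with Fujimoto's weight estimate inside the compound matrix, is the ingredient missing from your proposal.
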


\begin{proof}
	The idea of the proof is to implement Nochka's weight technique~\cite{Nochka1983} in the course of Fujimoto's vanishing order estimates~\cite[Prop. 5.3]{Fujimoto1982}.
	Since
	\[
	\ord_a A_i^*\circ \overline{F}_k
	=
	\min\{\ord_a A_i^*\circ \overline{F}_k,(k+1)(n-k)\}
	+
	\big(\ord_a A_i^*\circ \overline{F}_k-(k+1)(n-k)\big)^+,
	\]
we can restate the inequality~\eqref{divisor inequality statement} as
\begin{align}
\label{estimate zero order of W, reduced}
(\det(\mathcal{W}))_0
\geq
\mathfrak{n}\mathcal{D}_k
+
\sum_{i\in Q}\omega(i)\sum_{a\in\mathbb{C}}\big(\ord_a A_i^*\circ \overline{F}_k-(k+1)(n-k)\big)^+\cdot\{a\}.
\end{align}	
It is a pointwise inequality, hence for every fixed $a\in\mathbb{C}$ we focus on the indices 
\[
S:=
\{
i\in Q: \ord_a A_i^*\circ \overline{F}_k
\geq
(k+1)(n-k)+1\},
\]
having nonzero contribution to the right hand side of~\eqref{estimate zero order of W, reduced}. 
By Corollary~\ref{rough estimate of divisors}, we only need to consider the case that $S\not=\emptyset$. Moreover, we claim that $|S|< \mathfrak{N}$. Indeed, suppose on the contrary that $S$ contains $\mathfrak{N}$ indices, say $1, \dots, \mathfrak{N}$. By the assumption of subgeneral position, the corresponding hyperplanes $A_1, \dots, A_{\mathfrak{N}}$ have empty intersection, hence
at least one $A_i^*\circ \overline{F}_k(a)\neq 0$, contradicting to the selection of $S$. 
 
Now we exhibit the distinct values $\{\ord_a A_i^*\circ \overline{F}_k\}_{i\in S}$ from high to low
\[
m_1>m_2>\dots>\dots>m_t
,\]
 and then set a filtration of  $S$ accordingly
$
 S_0:=\emptyset\subset S_1\subset\dots\subset S_{t}=S
$,
where  for every $i\in S_{\ell}\setminus S_{\ell-1}$, there holds $\ord_aA_i^*\circ \overline{F}_k = m_{\ell}$, respectively for $\ell=1,\dots, t$. Let $\{T_{\ell}\subset S_{\ell}\}_{\ell\,=\,1,\dots,\, t}$ be a family of increasing subsets $T_1\subset \cdots\subset T_t$ constructed subsequently  by the law $|T_{\ell}|=\rank(T_{\ell})=\rank(S_{\ell})$.
 Set $ \widetilde{m_{\ell}}=m_{\ell}-(k+1)(n-k)$. 
Now we can estimate
\begin{align}
\label{first estimate, remove nochka weights}
&\ \ \ \ \ \sum_{i\in Q}\omega(i)
\big(
\ord_a A_i^*\circ \overline{F}_k-(k+1)(n-k)
\big)^+\notag\\
&=
\sum_{i\in S}\omega(i)
\big(
\ord_a A_i^*\circ \overline{F}_k-(k+1)(n-k)
\big)\notag\\
&=
\sum_{\ell=1}^{t}\sum_{i\in S_{\ell}\setminus S_{\ell-1}}
\omega(i)\,
\widetilde{m_{\ell}}\notag\\
&=
\sum_{\ell=1}^{t}
\Big{(}
\sum_{i\in S_{\ell}}
\omega(i)\,
\widetilde{m_{\ell}}
-
\sum_{j\in S_{\ell-1}}
\omega(j)\,
\widetilde{m_{\ell}}
\Big{)}
\notag\\
&=
(\widetilde{m_1}-\widetilde{m_2})\sum_{i\in S_1}\omega(i)
+
(\widetilde{m_2}-\widetilde{m_3})\sum_{i\in S_2}\omega(i)
+
\dots
+
\widetilde{m_t}\sum_{i\in S_{t}}\omega(i)\notag\\
\text{[by~\eqref{Nochka rank inequatlity}]}\ \ \ \ \ 
&\leq
(\widetilde{m_1}-\widetilde{m_2})\rank(S_1)
+
(\widetilde{m_2}-\widetilde{m_3})\rank(S_2)
+
\dots
+
\widetilde{m_t}\rank(S_{t})\notag\\
&=
\rank(S_1)\widetilde{m_1}+
\big(\rank(S_2)-\rank(S_1)\big)\widetilde{m_2}
+
\dots
+
\big(\rank(S_{t})-\rank(S_{t-1})\big)\widetilde{m_t}\notag\\
&=
|T_1|\widetilde{m_1}
+
(|T_2|
-
 |T_1|)\widetilde{m_2}
+
\dots
+(|T_{t}|- T_{t-1}|)\widetilde{m_t}
\notag
\\
&=
|T_1|\widetilde{m_1}
+
|T_2\setminus T_1|\widetilde{m_2}
+
\dots
+|T_{t}\setminus T_{t-1}|\widetilde{m_t}.
\end{align}

Changing the indices of hyperplanes $\{A_i\}_{i\in Q}$ if necessary, we may assume that $T_{t}=\{1,\dots,|T_{t}|\}$. Set  $m^*_s=\ord_aA_s^*\circ \overline{F}_k$ for $s=1,\dots, |T_{t}|$. Then~\eqref{first estimate, remove nochka weights} reads as
\begin{align}
\label{remove nochka estimate, change order}
\sum_{i\in Q}\omega(i)
\big(
\ord_a A_i^*\circ \overline{F}_k-(k+1)(n-k)
\big)^+
\leq
\sum_{s=1}^{|T_{t}|}
\big(
m_s^*-(k+1)(n-k)
\big).
\end{align}
Hence the desired inequality~\eqref{estimate zero order of W, reduced} can be established by showing a stronger estimate
\begin{equation}
\label{desired estimate}
\ord_a\det(\mathcal{W})
\geq
\mathfrak{n}\,\mathcal{D}_k(a)
+
\sum_{s=1}^{|T_{t}|}
\big(
m_s^*-(k+1)(n-k)
\big).
\end{equation}

We first recall that a similar a priori estimate~\eqref{rough estimate of divisors}
can be achieved by applying Lemma~\ref{local expression of f}.
Indeed, we can calculate the vanishing orders of $\det(\mathcal{W})$ and $\mathcal{D}_k$ at the given point $a$ more effectively by  means of nice coordinates, in which $f_0,\dots,f_n$ have explicit increasing vanishing shapes as~\eqref{nice coordinates}. Thus for every $I, J\in \mathcal{S}$ the $(\|I\|,\|J\|)$-th entry of $\mathcal{W}$ has vanishing order $\geq \mathcal{D}_k-\mathsf{w}(I)+\mathsf{w}(J)$, whence $\det \mathcal{W}$ satisfies the
 estimate~\eqref{rough estimate of divisors} by straightforward summation based on the Laplace expansion. 
But to reach the stronger estimate~\eqref{desired estimate} we need more effort, inevitably by exploiting the extra condition that $\{A_j^*\circ \overline{F}_k\}_{j=1,\dots,\, |T_t|}$
have high vanishing orders. 
 Here is our strategy. 
 We will modify some $|T_t|$ columns of $\mathcal{W}$
to represent the information of $\{A_j^*\circ \overline{F}_k\}_{j\,=\,1,\dots,\, |T_t|}$,
by multiplying certain well-chosen invertible matrix $\mathsf{I}$.
Thus the new obtained matrix $\widetilde{\mathcal{W}}=\mathcal{W}\cdot\mathsf{I}$ keeps the same vanishing order of determinant. Now for $s=1,\dots, |T_t|$ the $s$-th ``new column" of 
$\widetilde{\mathcal{W}}$ contribute, in each entry, at least $m_s^*-(k+1)(n-k)$ more vanishing order estimate than that of $\mathcal{W}$. Whence by counting vanishing order in each term of the Laplacian expansion of $\det (\widetilde{\mathcal{W}})_0$, we conclude the proof.

Now we carry out the details. Starting with the following 

\smallskip\noindent
{\bf Fact.}
{\it
	Let $\{v_i\}_{i=1,\dots,\, m }$ be a basis of a linear space $V$, and let
	$\widetilde{v}_1,\dots,\widetilde{v}_{\ell}\in V$ be some linearly independent vectors. Then one can replace some $\ell$ vectors in 
	$\{v_i\}_{i=1,\dots,\, m }$ by $\widetilde{v}_1,\dots,\widetilde{v}_{\ell}$
	such that they still form a basis.
	\qed
}

Applying the above fact to $V=\Lambda^{k+1}(\mathbb{C}^{n+1})^{\vee}$ and its basis $\{e^*_{I_i}=\wedge_{\ell\in I_i} e^*_{\ell}\}_{i\,=\,0,\,1,\dots,\, \mathfrak{n}-1}$,
we can replace some $|T_t|$ vectors $e_{I_{i_{1}}}^*,\dots, 
e_{I_{i_{|T_t|}}}^*$ by 
 $A_1^*,\dots, A_{|T_t|}^*$ respectively to
 receive a new basis 
 \begin{equation}
 \label{change basis}
 (b_1,\dots, b_{\mathfrak{n}})=(e^*_{I_0},\dots, e^*_{I_{\mathfrak{n}-1}})
 \cdot
 \mathsf{I},
 \end{equation}
where according to our construction, $\mathsf{I}$
differs from the identity matrix 
only in the columns $i_1+1,\dots, i_{|T_t|}+1$.

Write 
 $A_1^*=l_0\wedge \cdots \wedge l_k$,
 where linear forms $l_j\in (\mathbb{C}^{n+1})^{\vee}$
comparing to the standard basis $\{e_j^*\}_{j=0,\dots,\, n}$ read as $(l_0,\dots, l_k)=(e_0^*,\dots, e_n^*)\cdot L$ for  some $(n+1)\times (k+1)$ matrix $L$.
By~\eqref{wedge product formula} we have
\begin{equation}
\label{A1*=}
A_1^*
=
\sum_{0\,\leq\, j_0\,<\cdots <\,j_k\,\leq\, n} \det(L_{\{j_0,\dots,\,j_k\}})
\,
e_{j_0}^*\wedge\cdots\wedge e_{j_k}^*
=
\sum_{i=0}^{\mathfrak{n}-1}\det(L_{I_i})
\,
e^*_{I_i},
\end{equation}
where $L_{\{j_0,\dots,\,j_k\}}$ consists of the rows $j_0+1,\dots, j_k+1$ of $L$.
This shows all the entries of the $(i_1+1)$-th column of $\mathsf{I}$, and hence  the $(\|J\|, i_1+1)$-th entry of $\widetilde{\mathcal{W}}=\mathcal{W}\cdot\mathsf{I}$ is nothing but:
\begin{align}
\sum_{i=0}^{\mathfrak{n}-1}\det(L_{I_i})
\,
W(J,I_i)
&
=
\sum_{I\in \mathcal{S}}\det(L_{I})
\,
\det
\big( (f^{(j)}_{\ell})_{j\in J,\, \ell\in I}
\big)
\label{A1Fk}
\\
\text{[by Cauchy-Binet Formula]\ \ \ }
&
=
\det
\big( (h^{(j)}_{\ell})_{j\,\in\, J,\, \ell\,=\,0,\dots,\, k}
\big)
\label{det(AF)},
\end{align}
where similar to~\eqref{use Cauchy-Binet formula} we have
\begin{equation}
\label{use Cauchy-Binet formula again}
(h_0,\dots,h_k)
=
(l_0\circ\widetilde{f},\dots, l_k\circ\widetilde{f})
\end{equation}
for the lifting $\widetilde{f}$ given in~\eqref{lifting f}.
Noting that $h^{(j)}_{\ell}=l_{\ell}\circ \widetilde{f}^{(j)}$,
setting $\widetilde{F}^{J}:=\wedge_{j\in J}\widetilde{f}^{(j)}$,
then~\eqref{det(AF)} becomes
\begin{align}
\det
\big( (h^{(j)}_{\ell})_{j\,\in\, J,\, \ell\,=\,0,\dots,\, k}
\big)
&=
\det
\big( (l_{\ell}\circ \widetilde{f}^{(j)})_{j\,\in\, J,\, \ell\,=\,0,\dots,\, k}
\big)
\notag
\\
&
=
(l_0\wedge \cdots \wedge l_k)\cdot (\wedge_{j\in J}\widetilde{f}^{(j)})
\notag
\\
&
=A_1^*\circ \widetilde{F}^{J}
\label{neat A_1*F^J}.
\end{align}

In particular, for $J=I_0=\{0,\dots,k\}$, 
the $(1, i_1+1)$-th entry of $\widetilde{\mathcal{W}}$ is 
$ A_1^*\circ \widetilde{F}_k=g\cdot( A_1^*\circ \overline{F}_k)$,
which is known to have high vanishing order $\mathcal{D}_k(a)+m_1^*$ at the point $a$. Lastly, applying Corollary~\ref{typo cor} upon the neat determinant~\eqref{det(AF)} and using Remark~\ref{estimate of the weight of I},
 we conclude that for any $J\in \mathcal{S}$ the $(\|J\|, i_1+1)$-th entry of $\widetilde{\mathcal{W}}$ has vanishing order at least 
 \begin{equation}
 \label{use typo cor}
 \mathcal{D}_k(a)+m_1^*-\mathsf{w}(J)
 \geq
 \Big( \mathcal{D}_k(a)-\mathsf{w}(J)+
 \mathsf{w}(I_{i_1})
 \Big)
 +
 \Big(
 m_1^*-(k+1)(n-k)
 \Big).
 \end{equation}
 Similarly, for $s=1,\dots,|T_t|$, the same argument shows that the $(\|J\|, i_s+1)$-th entry of $\widetilde{\mathcal{W}}$ has vanishing order at least :
  \[
  \mathcal{D}_k(a)+m_s^*-\mathsf{w}(J)
  \geq
  \Big( \mathcal{D}_k(a)-\mathsf{w}(J)+
  \mathsf{w}(I_{i_s})
  \Big)
  +
  \Big(
  m_s^*-(k+1)(n-k)
  \Big).
  \]
   Note that the first bracket above is exactly  the original vanishing order estimate of the $(\|J\|, i_s+1)$-th entry of $\mathcal{W}$, and that the second bracket is a summand in~\eqref{desired estimate}.
 By straightforward summation based on the Laplace expansion, we conclude the proof.
\end{proof} 

\subsection{An application of the logarithmic derivative lemma}
Here is an estimate due to Fujimoto~\cite{Fujimoto1982}.

\begin{pro}
	\label{psi estimate}	
	One has the estimate
	\begin{equation}
	\label{estimate of logarithmic wronskian}
	\dfrac{1}{2\pi}\int_0^{2\pi}
	\max_{R\subset Q,\,|R|=\rank(R)=\mathfrak{n}}
	\log
	\dfrac{|\det(\mathcal{W})|}{\prod_{i\in R}| A_i^*\circ \widetilde{F}_k|}
	(re^{i\theta})
	\dif\theta
	=S_{F_k}(r).
	\end{equation}
\end{pro}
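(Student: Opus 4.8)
The plan is to reduce the quantity on the left-hand side of~\eqref{estimate of logarithmic wronskian} to a sum of proximity functions of logarithmic derivatives, to which the classical logarithmic derivative lemma applies. First I would fix a subset $R \subset Q$ with $|R| = \rank(R) = \mathfrak{n}$ and analyze the single term $\log\big(|\det \mathcal{W}| / \prod_{i \in R}|A_i^* \circ \widetilde{F}_k|\big)$. The point is that, because the $\mathfrak{n}$ decomposable hyperplanes $\{A_i\}_{i \in R}$ are in general position (their defining $(k+1)$-vectors $A_i^*$ are linearly independent in $\Lambda^{k+1}(\mathbb{C}^{n+1})^\vee$, since $\rank(R) = \mathfrak{n}$ is the full dimension), the map $Z \mapsto (A_i^*(Z))_{i \in R}$ is a linear isomorphism of $\mathbb{C}^{\mathfrak{n}}$. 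Hence one may change the Pl\"ucker coordinates on $\Lambda^{k+1}(\mathbb{C}^{n+1})$ by an invertible matrix depending only on $\{A_i\}_{i \in R}$, so that in the new coordinates the components of $\widetilde{F}_k$ are precisely $A_i^* \circ \widetilde{F}_k$, $i \in R$. Under this change of coordinates $\det \mathcal{W}$ gets multiplied by a nonzero constant (by~\eqref{Sylvester-Franke theorem application} and the Sylvester–Franke behaviour under a coordinate change, or simply because $\det \mathcal{W}$ is the Wronskian power, which scales by the determinant of the transformation), so up to an additive $O(1)$ we may assume the $A_i^* \circ \widetilde{F}_k$ are themselves $\mathfrak{n}$ of the Pl\"ucker components $W(I_0, J)$ of $\widetilde{F}_k$.

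Next I would use Fujimoto's trick, Proposition~\ref{expression of wronskian}, applied to the $\mathfrak{n}$ linearly independent holomorphic functions $\{A_i^* \circ \widetilde{F}_k\}_{i \in R}$ (these are linearly independent since $\widetilde{F}_k$ is not contained in any decomposable hyperplane, and more strongly its $\mathfrak{n}$ Pl\"ucker components are linearly independent as $f$ is linearly nondegenerate — this is exactly what~\eqref{use Cauchy-Binet formula} and the surrounding discussion establish). The ratio $\det \mathcal{W} / \prod_{i \in R}(A_i^* \circ \widetilde{F}_k)$ can then be written as a universal polynomial in logarithmic derivatives of the form $\big((\det(\cdots))'/\det(\cdots)\big)^{(\lambda-1)}$, built from sub-Wronskians of the $A_i^* \circ \widetilde{F}_k$. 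Therefore, integrating $\log^+$ of this ratio over the circle $|z| = r$ and invoking the logarithmic derivative lemma (in the sharp form $m(r, \varphi'/\varphi) = S_{F_k}(r)$ for $\varphi$ a ratio of holomorphic functions controlled by $T_{F_k}$, together with the fact that all sub-Wronskians of the $A_i^* \circ \widetilde{F}_k$ have characteristic $O(T_{F_k}(r))$ by the standard estimate for Wronskians and~\eqref{same growth rate for derived curves}) bounds the single-$R$ integral by $S_{F_k}(r)$. Since there are only finitely many such subsets $R$ — a number depending only on $q$ and $\mathfrak{n}$ — the maximum over $R$ is bounded by the sum, and $S_{F_k}(r) + \dots + S_{F_k}(r) = S_{F_k}(r)$; one also uses that $\log^+$ of the maximum equals the maximum of the $\log^+$'s up to the constant $\log(\text{number of terms})$, absorbed into $O(1) \subset S_{F_k}(r)$.

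The main obstacle I expect is the bookkeeping in the reduction of the first paragraph: one must check carefully that the coordinate change intertwining $\widetilde{F}_k$ with $(A_i^* \circ \widetilde{F}_k)_{i\in R}$ affects $\det \mathcal{W}$ only by a nonzero multiplicative constant, and that the ``error'' terms $A_i^* \circ \widetilde{F}_k$ for $i \notin R$, as well as the passage from $\widetilde{F}_k$ to $\overline{F}_k = \widetilde{F}_k / g$, do not spoil the estimate — but in fact the quotient $g$ cancels identically between numerator and denominator in each $R$-term since $\det \mathcal{W}$ and each $A_i^* \circ \widetilde{F}_k$ both carry the factor $g$ with the right multiplicities, so this is harmless. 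The other technical point is ensuring the logarithmic derivative lemma is applied to functions whose growth is genuinely controlled by $T_{F_k}(r)$; this follows because every sub-Wronskian appearing as a ``variable'' in Proposition~\ref{expression of wronskian} is a holomorphic function whose zero divisor and Nevanlinna characteristic are dominated by $O(T_{F_k}(r)) = O(T_f(r))$ via~\eqref{same growth rate for derived curves}, so the exceptional set of $r$ and the $S_{F_k}(r)$ error are uniform. Once these points are settled, the estimate~\eqref{estimate of logarithmic wronskian} follows.
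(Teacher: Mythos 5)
Your overall strategy -- bound the max over $R$ by a sum, fix $R$, use that $\{A_i^*\}_{i\in R}$ is a basis of $\Lambda^{k+1}(\mathbb{C}^{n+1})^\vee$ to change coordinates by an invertible constant matrix, and reduce to the logarithmic derivative lemma via Proposition~\ref{expression of wronskian} -- matches the paper's plan. But two steps in the middle are wrong or missing, and they are not cosmetic.

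First, you never pass to the logarithmic setting. The paper replaces $\widetilde f$ by $\widetilde f/f_0$, forming $\widetilde F_{k,\log}=f_0^{-(k+1)}\widetilde F_k$ and $\mathcal{W}_{\log}=f_0^{-(k+1)\mathfrak n}\det\mathcal W$-scaled compound, precisely so that the resulting quotients are meromorphic functions whose characteristics are controlled by $T_f(r)$ (via the cited fact $T(r,f_i/f_0)=O(T_f(r))$) and hence by $T_{F_k}(r)$ through~\eqref{same growth rate for derived curves}. Your proposal applies the logarithmic derivative lemma directly to sub-Wronskians of the entire functions $A_i^*\circ\widetilde F_k$ and asserts that these have ``Nevanlinna characteristic dominated by $O(T_{F_k}(r))$.'' That is false in general: the proximity term $\int\log^+|A_i^*\circ\widetilde F_k|$ of an entire function is not controlled by the Cartan order function of the projective map, since the latter only sees ratios of components (e.g.~multiply every $f_j$ by $e^{z}$; $T_f$ is unchanged but the entire Wronskians acquire extremely fast growth). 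Without the $f_0$-normalization, the error term of the logarithmic derivative lemma is $S$ of the wrong thing, and the argument does not close.

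Second, your invocation of Proposition~\ref{expression of wronskian} is not a valid application of that statement. That proposition takes $k+1$ linearly independent meromorphic functions $h_0,\dots,h_k$ and expresses a ratio of a shifted-order $(k+1)\times(k+1)$ Wronskian to the plain one as an isobaric polynomial in quantities $\big((\det(\cdots))'/\det(\cdots)\big)^{(\lambda-1)}$. You propose to apply it to the $\mathfrak n$ functions $\{A_i^*\circ\widetilde F_k\}_{i\in R}$ and thereby rewrite $\det\mathcal W/\prod_{i\in R}(A_i^*\circ\widetilde F_k)$ as such a polynomial ``built from sub-Wronskians of the $A_i^*\circ\widetilde F_k$.'' But $\det\mathcal W$ is the determinant of the $(k+1)$-st compound matrix of $(f_j^{(i)})$, not the Wronskian of the $\mathfrak n$ functions $A_i^*\circ\widetilde F_k$, so the ratio is simply not of the form the proposition treats. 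The paper's route is different: after the coordinate change one shows, via Cauchy--Binet, that the $(\|I\|,j)$-entry of $\mathcal W_{\log}\cdot\mathsf C$ equals $A_j^*\circ\widetilde F^{\,I}_{\log}=\det\big((h^{(i)}_\ell)_{i\in I,\ell=0,\dots,k}\big)$ with $h_\ell=l_\ell\circ\widetilde f/f_0$ and $A_j^*=l_0\wedge\cdots\wedge l_k$; the ratio is then recast as $|\det\mathsf C^{-1}|$ times a determinant whose $(i,j)$-entry is the Fujimoto-type quotient $A_j^*\circ\widetilde F^{\,I_{i-1}}_{\log}\big/\big(A_j^*\circ\widetilde F_{k,\log}\big)$, to each of which Proposition~\ref{expression of wronskian} legitimately applies with the $k+1$ functions $h_0,\dots,h_k$. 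You would need to supply this intermediate identity; as stated, the proposal jumps from the coordinate change to a polynomial expression that the cited lemma does not produce. (Your remark about $g$ cancelling is immaterial here -- $g$ does not appear anywhere in the expression~\eqref{estimate of logarithmic wronskian}, which is written entirely with $\widetilde F_k$.)
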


For the sake of completeness, we include a proof here. To start with, let us recall

\begin{namedthm*}{Logarithmic derivative Lemma}
	\cite[Lem. 4.2.9]{Noguchi-Winkelmann2014}
	Let $g$ be a nonconstant meromorphic function on $\mathbb{C}$. Then for any integer $\ell\geq 1$, the following estimate holds
	\[
	m_{\big(\frac{g'}{g}\big)^{(\ell)}}(r)
	:=
	m_{\big(\frac{g'}{g}\big)^{(\ell)}}(r)(r,\infty)=S_g(r).
	\]	
\end{namedthm*}

To prove~\eqref{estimate of logarithmic wronskian}, one must get rid of $g$ in the left-hand side. Hence it is necessary to work in logarithmic setting. Taking the wedge products of the logarithmic derivatives
\[
{f}_{\log}^{(\ell)}
=
\bigg(
\Big(\dfrac{f_0}{f_0}\Big)^{(\ell)},\dots,\Big(\dfrac{f_n}{f_0}\Big)^{\ell)}
\bigg)
:\ \ \mathbb{C} \ \ \longrightarrow
\ \ \mathbb{C}^{n+1}
\eqno
\scriptstyle{(\ell\,=\,0,\,1,\dots,\, k)},
\]
we obtain the logarithmic derived curve
\[
\widetilde{F}_{k,\log}
=
{f}_{\log}^{(0)}
\wedge\cdots\wedge
{f}_{\log}^{(k)}
:\ \ \mathbb{C} \ \ \longrightarrow
\ \ \Lambda^{k+1}\,\mathbb{C}^{n+1},
\]
which in Pl\"{u}cker coordinates reads as
\[
\widetilde{F}_{k,\log}
=
\sum_{0\,\leq\, i_0\,<\,i_1\,<\,\cdots\,<\,i_k\,\leq\, n} W_{\log}(f_{i_0},\dots,f_{i_k})\,e_{i_0}\wedge\dots\wedge e_{i_k},
\]
where 
\[
W_{\log}(f_{i_0},\dots,f_{i_k})
:=
\det
\Big(
(f_{i_{\beta}}/f_0)^{(\alpha)}
\Big)_{\alpha,\,\beta\,=\,0,\dots,\, k }
=f_0^{-(k+1)}W(f_{i_0},\dots,f_{i_k})
\]
is the logarithmic Wronskian. Hence we have
\begin{equation}
\label{bar Fk and widetilde Fk compare}
\widetilde{F}_{k,\,\log}
=
f_0^{-(k+1)}\widetilde{F}_k.
\end{equation}

For $I,J\in\mathcal{S}$, the logarithmic analog $W_{\log}(I,J)$ of $W_{I,J}$ is defined to be the determinant of the matrix $\big(({f_j}/{f_0})^{(i)}\big)_{i\in I, j\in J}$.
Setting
\[
\mathcal{W}_{\log}:=
\big(W_{\log}(I_r,I_s)\big)_{0\,\leq\, r,\,s\,\leq\, \mathfrak{n}-1},
\]
by Sylvester-Franke theorem
we have
\begin{align}
\det(\mathcal{W}_{\log})
&=
W_{\log}(f_0,\dots,f_n)^{\left(\substack{n\\k}\right)}
\notag
\\
&=
\Big(
f_0^{-(n+1)}
W(f_0,\dots,f_n)
\Big)^{\left(\substack{n\\k}\right)}
\notag
\\
\label{det W and det Wlog compare}
\text{[recall~\eqref{Sylvester-Franke theorem application}]}\ \ \ \ \ \
&=
f_0^{-(k+1)\mathfrak{n}}
\det(\mathcal{W}),
\end{align}
where in the last equality
we need a straightforward calculation
$(n+1)\left(\substack{n\\k}\right)=(k+1)\left(\substack{n+1\\k+1}\right)=(k+1)\mathfrak{n}$.

\textit{Proof of Proposition~\ref{psi estimate}.} 
	By~\eqref{bar Fk and widetilde Fk compare}, \eqref{det W and det Wlog compare}, we rewrite
	\[
	\dfrac{|\det(\mathcal{W})|}{\prod_{i\in R}| A_i^*\circ \widetilde{F}_k|}
	=
	\dfrac{|\det(\mathcal{W}_{\log})|}{\prod_{i\in R}| A_i^*\circ \widetilde{F}_{k,\, \log}|}.
	\]
Hence
\begin{equation}
\label{maxlog det w/Ai FK estimate}
\max_{R\subset Q,\,|R|=\rank(R)=\mathfrak{n}}
\log
\dfrac{|\det(\mathcal{W})|}{\prod_{i\in R}|A_i^*\circ \widetilde{F}_k|}
\leq
\sum_{R\subset Q,\,|R|=\rank(R)=\mathfrak{n}}
\log^+
\dfrac{|\det(\mathcal{W}_{\log})|}{\prod_{i\in R}|A_i^*\circ \widetilde{F}_{k,\,\log}|}.
\end{equation}	

We now analyze each summand above.
Without loss of generality, we illustrate by $R=\{1, \dots,\mathfrak{n}\}$ for simplicity of indices.
Since $\{A_i^*\}_{i\in R}$ form a basis for
$\Lambda^{k+1}(\mathbb{C}^{n+1})^{\vee}$, changing coordinates we read
\[
(A_1^*,\dots, A_{\mathfrak{n}}^*)
=
(e^*_{I_0},\dots, e^*_{I_{\mathfrak{n}-1}})
\cdot
\mathsf{C}
\]
for an invertible $\mathfrak{n}\times \mathfrak{n}$-matrix $\mathsf{C}$.
Now the matrix
$\widetilde{\mathcal{W}}_{\log}:=\mathcal{W}_{\log}\cdot \mathsf{C}$, similar to 
 $\widetilde{\mathcal{W}}$ below~\eqref{change basis}, 
 has a neat expression of determinant in each entry. Indeed,
 setting $\widetilde{F}^I_{ \log}:=\wedge_{i\in I}{f}^{(i)}_{\log}$ for every $I \in\mathcal{S}$, by the same argument as~\eqref{A1Fk}, \eqref{det(AF)},
 the $(|I|, j)$-th entry of $\widetilde{\mathcal{W}}_{\log}$ is nothing but
 $A_j^*\circ\widetilde{F}^I_{ \log}$.
Thus
\begin{align}
\dfrac{|\det(\mathcal{W}_{\log})|}{\prod_{i\in R}|A_i^*\circ \widetilde{F}_{k,\,\log}|}
&=
\dfrac{|\det(\widetilde{\mathcal{W}}_{\log}\cdot\mathsf{C}^{-1})|}{\prod_{i\,=\,1,\dots,\, \mathfrak{n}}|A_i^*\circ \widetilde{F}_{k,\,\log}|}
\notag
\\
&=
|\det(\mathsf{C}^{-1})|
\times
\dfrac{
	\Big|
	\det
	\big(
	A_{j}^*\circ\widetilde{F}^{I_{i-1}}_{\log}
	\big)_{i,\,j\,=\,1,\dots,\,\mathfrak{n}}
\Big|
}
{
\prod_{j\,=\,1,\dots,\, \mathfrak{n}}|A_j^*\circ \widetilde{F}_{k,\,\log}|}
\notag\\
&=
|\det(\mathsf{C}^{-1})|
\times
\bigg|
\det
	\bigg(
	\dfrac{A_{j}^*\circ\widetilde{F}^{I_{i-1}}_{\log}}{A_j^*\circ \widetilde{F}_{k,\,\log}}
	\bigg)_{i,\,j\,=\,1,\dots,\,\mathfrak{n}}
\bigg|.
\label{logarithimic trick}
	\end{align}
Using the basic inequalities
\begin{equation}
\label{basic inequality log+}
\log^+\bigg(\sum_{i=1}^px_i\bigg)
\leq\sum_{i=1}^p\log^+x_i+\log p,\qquad\qquad
\log^+\bigg(\prod_{i=1}^px_i\bigg)
\leq 
\sum_{i=1}^p\log^+x_i,
\end{equation}
we have
\begin{align}
\label{logarithmic wronskian term by term}
\log^+
\dfrac{|\det(\mathcal{W}_{\log})|}{\prod_{i\in R}|A_i^*\circ \widetilde{F}_{k,\,\log}|}
\leq
\sum_{i,\,j\,=\,1,\dots,\,\mathfrak{n}}
\log^+
\bigg|
\dfrac{A_{j}^*\circ\widetilde{F}^{I_{i-1}}_{\log}}{A_j^*\circ \widetilde{F}_{k,\,\log}}
\bigg|
+C,
\end{align}	
where $C$ is some constant independent of $f$.	Now the problem reduces to showing that
\begin{equation}
\label{reduced problem, logarithmic wronskian Ai FIr/Ai Fk}
\dfrac{1}{2\pi}\int_0^{2\pi}
\log^+
\bigg|
\dfrac{A_{j}^*\circ\widetilde{F}^{I}_{\log}}{A_j^*\circ \widetilde{F}_{k,\,\log}}
\bigg|
(re^{i\theta})
\dif\theta
=
S_{F_k}(r),
\end{equation}
for any $I\in  \mathcal{S}$ and $j\in Q$.
We illustrate by $j=1$ and $A_1^*=a_0\wedge\cdots \wedge a_k$, where
each $a_i\in (\mathbb{C}^{n+1})^{\vee}$ in the standard dual basis $\{e_j^*\}_{j\,=\,0,\dots,\, n}$  
reads as
$a_i=\sum_{j=0}^n a_{i,j}e_j^*$.
Similar to~\eqref{det(AF)}, we have
\[
A_{j}^*\circ\widetilde{F}^{I}_{\log}
=
\det
\big( (h^{(i)}_{\ell})_{i\,\in\, I,\, \ell\,=\,0,\dots,\, k}
\big)
\]
where each $h_{\ell}=\sum_{i=0}^n
a_{\ell,i}\, f_i/f_0$. Now by applying Proposition~\ref{expression of wronskian} and the Logarithmic Derivative Lemma, the desired estimate~\eqref{reduced problem, logarithmic wronskian Ai FIr/Ai Fk} follows directly from~\eqref{same growth rate for derived curves}, ~\eqref{basic inequality log+} and the following

\smallskip\noindent
{\bf Fact}. \cite[page.~78, Thm. 2.5.13]{Noguchi-Winkelmann2014}
For every $i=0,1,\dots ,n$, one has the estimate
\[
T(r, f_{i}/f_0)\leq O(T_f(r)).
\]
\qed

\subsection{End of the proof of the Main Theorem}

Taking logarithm on both sides of~\eqref{estimation of norm, statement} and then integrating, we receive
\begin{align}
\label{T<=integer varphi+psi}
\widetilde{\omega}\big(q-2\,\mathfrak{N}+\mathfrak{n}\big)T_{F_k}(r)
\leq
\dfrac{1}{2\pi}
\int_0^{2\pi}
\log \varphi(re^{i\theta})
\dif\theta
+
\dfrac{1}{2\pi}
\int_0^{2\pi}
\psi(re^{i\theta})\dif\theta
+O(1),
\end{align}
where
\[
\varphi
=
\dfrac{|g|^{\mathfrak{n}}\prod_{i\in Q}| A_i^*\circ \overline{F}_k|^{\omega(i)}}{|\det(\mathcal{W})|},
\ \ \ \ \ 
\psi=
\sum_{R\subset Q,\,\rank(R)=|R|=\mathfrak{n}}
\dfrac{|\det(\mathcal{W})|}{\prod_{i\in R}|A_i^*\circ \widetilde{F}_k|}.
\]
Using Proposition~\ref{estimation of zero divisor of w}, we receive
\begin{align*}
(\varphi)_0
&=
\sum_{i\in Q}\omega_i(A_i^*\circ \overline{F}_k)_0
+
\mathfrak{n}(\mathcal{D}_k)_0-(\det\mathcal{W})_0\\
&\leq
\sum_{i\in Q}\omega(i)
\sum_{a\in\mathbb{C}}\min\{\ord_a A_i^*\circ \overline{F}_k,(k+1)(n-k)\}\{a\}\\
&\leq
\widetilde{\omega}\sum_{i\in Q}
\sum_{a\in\mathbb{C}}\min\{\ord_a A_i^*\circ \overline{F}_k,(k+1)(n-k)\}\{a\}.
\end{align*}
Whence by Jensen formula we have
\begin{align}
\dfrac{1}{2\pi}
\int_0^{2\pi}
\log|\varphi(re^{i\theta})|\dif\theta
&\leq
N_{\varphi}(r,0)+O(1)\notag\\
&\leq
\widetilde{\omega}
\sum_{i\in Q} N_{F_k}^{[(k+1)(n-k)]}(r,A_i)
+
O(1).\notag
\end{align}

Together this with~\eqref{T<=integer varphi+psi} and Proposition~\ref{psi estimate},
we finish the proof.

\section{Some applications}
\label{section: applications}
\subsection{A defect relation}

\begin{namedthm*}{Defect relation}
	Let $f:\mathbb{C}\rightarrow\mathbb{P}^n(\mathbb{C})$ be a linearly nondegenerate  entire holomorphic curve. For a fixed integer $k=0,1,\dots, n$, let $A_1,\dots, A_q\subset \mathbb{P}
	\big(
	\Lambda^{k+1}(\mathbb{C}^{n+1})
	\big)$ be  $q$ decomposable hyperplanes
	such that any $\mathfrak{N}$ of them have empty intersection. 
	  Then the $k$-th derived curve $F_k$ of $f$ satisfy the following estimate:
		\begin{equation}
	\label{defect relation}
	\sum_{i=1}^q \delta_{F_k}^{[(k+1)(n-k)]}(A_i)
	\leq
2
\mathfrak{N}
-
\mathfrak{n}.
\end{equation}
	\end{namedthm*}

\begin{proof}
	The Main Theorem can be rewritten as
	\[
		\sum_{i=1}^q
	\bigg(
	1-
	\dfrac{N_{F_k}^{[(k+1)(n-k)]}(r,A_i)}{T_{F_k}(r)}
	\bigg)
\leq
	2
	\mathfrak{N}
	-
	\mathfrak{n}
	+
	\dfrac{S_{F_k}(r)}{T_{F_k}(r)}.
	\]
	Taking the limit inferior of both sides of the above inequality, we conclude the proof.
\end{proof}

\subsection{Ramification Theorem}

\begin{thm}
	 In the setting of the Main Theorem, assuming moreover that
	 the associated $k$-th derived curve $F_k$ is completely $\mu_{k,i}$--ramified over each decomposable hyperplane $A_i$ for $i=1,\dots, q$, then one has
	\[
	\label{ramification theorem statement}
	\sum_{i=1}^q
	\bigg(
	1
	-
	\dfrac{(k+1)(n-k)}{\mu_{k,i}}
	\bigg) 
	\leq
2
\mathfrak{N}
-
\mathfrak{n}.
	\]
\end{thm}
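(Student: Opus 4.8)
The plan is to derive this ramification estimate as an immediate corollary of the Main Theorem by feeding the ramification hypotheses into the truncated counting functions. First I would recall the definition of complete $\mu_{k,i}$--ramification: it means that every zero of $A_i^*\circ\overline{F}_k$ has multiplicity at least $\mu_{k,i}$, with the convention $\mu_{k,i}=\infty$ when $F_k(\mathbb{C})$ misses $A_i$ entirely. Consequently, at each point $a$ in the support of the pullback divisor, the truncated local multiplicity satisfies
\[
\min\{\ord_a A_i^*\circ\overline{F}_k,\,(k+1)(n-k)\}
\;\leq\;
\frac{(k+1)(n-k)}{\mu_{k,i}}\,\ord_a A_i^*\circ\overline{F}_k,
\]
since $\min\{M,T\}\leq (T/\mu)\,M$ whenever $M\geq\mu$ (here $T=(k+1)(n-k)$, $M=\ord_a$, $\mu=\mu_{k,i}$). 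Summing over $a\in\Delta_r$ and taking logarithmic averages gives the comparison of counting functions
\[
N_{F_k}^{[(k+1)(n-k)]}(r,A_i)
\;\leq\;
\frac{(k+1)(n-k)}{\mu_{k,i}}\,N_{F_k}(r,A_i).
\]

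Next I would invoke the First Main Theorem for the derived curve with respect to the decomposable hyperplane $A_i$ (as established in Subsection~2.3, where it is shown that $F_k(\mathbb{C})$ is never contained in a decomposable hyperplane, so the defect lies in $[0,1]$ and $N_{F_k}(r,A_i)\leq T_{F_k}(r)+O(1)$). Chaining the two inequalities yields
\[
N_{F_k}^{[(k+1)(n-k)]}(r,A_i)
\;\leq\;
\frac{(k+1)(n-k)}{\mu_{k,i}}\,T_{F_k}(r)+O(1).
\]
Substituting this bound for each $i$ into the Main Theorem inequality \eqref{smt statement}, with $\mathfrak{N}$ as in that statement, gives
\[
(q-2\mathfrak{N}+\mathfrak{n})\,T_{F_k}(r)
\;\leq\;
\sum_{i=1}^q \frac{(k+1)(n-k)}{\mu_{k,i}}\,T_{F_k}(r)+S_{F_k}(r)+O(1).
\]
Rearranging, $\big(q-2\mathfrak{N}+\mathfrak{n}-\sum_{i=1}^q \tfrac{(k+1)(n-k)}{\mu_{k,i}}\big)T_{F_k}(r)\leq S_{F_k}(r)+O(1)$, and dividing by $T_{F_k}(r)$ and letting $r\to\infty$ along a sequence where $S_{F_k}(r)/T_{F_k}(r)\to 0$ (which exists by the stated property of $S_{F_k}$) forces the coefficient to be nonpositive, i.e.
\[
\sum_{i=1}^q\Big(1-\frac{(k+1)(n-k)}{\mu_{k,i}}\Big)
=\;q-\sum_{i=1}^q\frac{(k+1)(n-k)}{\mu_{k,i}}
\;\leq\;2\mathfrak{N}-\mathfrak{n},
\]
which is the claim.

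There is essentially no serious obstacle here: the argument is the standard passage from a Second Main Theorem to a ramification bound, and every analytic input (the truncated-to-full counting comparison, the First Main Theorem inequality, the non-degeneracy of $F_k$ relative to decomposable hyperplanes, the negligibility of $S_{F_k}$) has already been set up in the excerpt. The only point requiring a modicum of care is the edge case $\mu_{k,i}=\infty$: then the corresponding summand $1-(k+1)(n-k)/\mu_{k,i}$ is simply $1$, consistent with $N_{F_k}^{[(k+1)(n-k)]}(r,A_i)\equiv 0$, so the term contributes its full weight $1$ to the left-hand side, and the inequality still follows from the Main Theorem verbatim. I would also note in passing that this theorem reduces to the Defect relation \eqref{defect relation} since $\delta_{F_k}^{[(k+1)(n-k)]}(A_i)\geq 1-(k+1)(n-k)/\mu_{k,i}$ under complete $\mu_{k,i}$--ramification, so it is really a reformulation rather than a new statement.
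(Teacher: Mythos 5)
Your proof is correct and follows essentially the same route as the paper: the paper establishes $\delta_{F_k}^{[(k+1)(n-k)]}(A_i)\geq 1-(k+1)(n-k)/\mu_{k,i}$ via the chain $N^{[(k+1)(n-k)]}\leq (k+1)(n-k)\,N^{[1]}\leq\frac{(k+1)(n-k)}{\mu_{k,i}}N\leq\frac{(k+1)(n-k)}{\mu_{k,i}}T_{F_k}$ and then invokes the Defect relation, whereas you substitute the equivalent counting-function bound directly back into the Main Theorem inequality before passing to the limit — a purely cosmetic difference, as you yourself observe at the end.
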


\begin{proof}
	For an index $i$ with $\mu_{k,i}<\infty$,  every nonzero coefficients of the divisor $(A_i^*\circ \overline{F}_k)_0$ is $\geq \mu_i^k$. Hence
	\begin{align*}
	\delta_{F_k}^{[(k+1)(n-k)]}(A_i)
	&=
	1-
	\limsup\dfrac{N_{F_k}^{[(k+1)(n-k)]}(r,A_i)}{T_{F_k}(r)}
	\\
	&\geq
	1-(k+1)(n-k)
	\limsup
	\dfrac{N_{F_k}^{[1]}(r,A_i)}{T_{F_k}(r)}\\
	\explain{By the First Main Theorem}
	\ \ \ \ \ \ \ \ \ \ \ \ \
	&\geq
	1-(k+1)(n-k)
	\limsup
	\dfrac{N_{F_k}^{[1]}(r,A_i)}{N_{F_k}(r,A_i)}\\
	&\geq
	1-\dfrac{(k+1)(n-k)}{\mu_i^k}.
	\end{align*}
	When $\mu_i^k=\infty$, the above inequality is trivial. By the defect relation we finish the proof.
\end{proof}

\begin{center}
\bibliographystyle{plain}
\bibliography{reference}

\begin{thebibliography}{10}

\bibitem{Ahlfors1941}
Lars~V. Ahlfors.
\newblock The theory of meromorphic curves.
\newblock {\em Acta Soc. Sci. Fennicae. Nova Ser. A.}, 3(4):31, 1941.

\bibitem{Cartan1933}
Henri Cartan.
\newblock Sur les z\'{e}ros des combinaisons lin\'{e}arires de $p$ fonctions
  holomorphes donn\'{e}es.
\newblock {\em Mathematica}, 7:5--31, 1933.

\bibitem{Chen1990}
Wan~Xi Chen.
\newblock Defect relations for degenerate meromorphic maps.
\newblock {\em Trans. Amer. Math. Soc.}, 319(2):499--515, 1990.

\bibitem{Griffiths-Cowen76}
Michael Cowen and Phillip Griffiths.
\newblock Holomorphic curves and metrics of negative curvature.
\newblock {\em J. Analyse Math.}, 29:93--153, 1976.

\bibitem{Fujimoto1982}
Hirotaka Fujimoto.
\newblock The defect relations for the derived curves of a holomorphic curve in
  {$P^{n}({\bf C})$}.
\newblock {\em Tohoku Math. J. (2)}, 34(1):141--160, 1982.

\bibitem{Fujimoto1993}
Hirotaka Fujimoto.
\newblock {\em Value distribution theory of the {G}auss map of minimal surfaces
  in {${\bf R}^m$}}.
\newblock Aspects of Mathematics, E21. Friedr. Vieweg \& Sohn, Braunschweig,
  1993.

\bibitem{Nevanlinna1925}
Rolf Nevanlinna.
\newblock Zur {T}heorie der {M}eromorphen {F}unktionen.
\newblock {\em Acta Math.}, 46(1-2):1--99, 1925.

\bibitem{Nochka1983}
E.~I. Nochka.
\newblock On the theory of meromorphic curves.
\newblock {\em Dokl. Akad. Nauk SSSR}, 269(3):547--552, 1983.

\bibitem{Noguchi-Winkelmann2014}
Junjiro Noguchi and J\"{o}rg Winkelmann.
\newblock {\em Nevanlinna theory in several complex variables and {D}iophantine
  approximation}, volume 350 of {\em Grundlehren der Mathematischen
  Wissenschaften [Fundamental Principles of Mathematical Sciences]}.
\newblock Springer, Tokyo, 2014.

\bibitem{Ru2009}
Min Ru.
\newblock Holomorphic curves into algebraic varieties.
\newblock {\em Ann. of Math. (2)}, 169(1):255--267, 2009.

\bibitem{Ru2018}
Min Ru.
\newblock A {C}artan's second main theorem approach in {N}evanlinna theory.
\newblock {\em Acta Math. Sin. (Engl. Ser.)}, 34(8):1208--1224, 2018.

\bibitem{Shabat1985}
B.~V. Shabat.
\newblock {\em Distribution of values of holomorphic mappings}, volume~61 of
  {\em Translations of Mathematical Monographs}.
\newblock American Mathematical Society, Providence, RI, 1985.
\newblock Translated from the Russian by J. R. King, Translation edited by Lev
  J. Leifman.

\bibitem{Stoll53}
Wilhelm Stoll.
\newblock Die beiden {H}aupts\"{a}tze der {W}ertverteilungstheorie bei
  {F}unktionen mehrerer komplexer {V}er\"{a}nderlichen. {I}.
\newblock {\em Acta Math.}, 90:1--115, 1953.

\bibitem{Stoll54}
Wolhelm Stoll.
\newblock Die beiden {H}aupts\"{a}tze der {W}ertverteilungstheorie bei
  {F}unktionen mehrerer komplexer {V}er\"{a}nderlichen. {II}.
\newblock {\em Acta Math.}, 92:55--169, 1954.

\bibitem{Weyl1943}
Hermann Weyl.
\newblock {\em Meromorphic {F}unctions and {A}nalytic {C}urves}.
\newblock Annals of Mathematics Studies, no. 12. Princeton University Press,
  Princeton, N. J., 1943.

\bibitem{Weyl1938}
Hermann Weyl and Joachim Weyl.
\newblock Meromorphic curves.
\newblock {\em Ann. of Math. (2)}, 39(3):516--538, 1938.

\bibitem{Wu1970}
Hung-Hsi Wu.
\newblock {\em The equidistribution theory of holomorphic curves}.
\newblock Annals of Mathematics Studies, No. 64. Princeton University Press,
  Princeton, N.J.; University of Tokyo Press, Tokyo, 1970.

\end{thebibliography}
\end{center}
\Addresses
\end{document}